\documentclass[12pt,reqno]{amsart}
 
 \usepackage[english]{babel}
 \usepackage[utf8]{inputenc} 
 \usepackage[T1]{fontenc} 
 \usepackage{textcomp}
 \usepackage{bm}
 \usepackage{cite}

\usepackage[margin=1.2in, marginparwidth=0.85in]{geometry}

\usepackage{mathtools,mathrsfs}
\mathtoolsset{showonlyrefs}

\usepackage{graphicx,enumerate}

\usepackage{hyperref,color}
\usepackage[dvipsnames]{xcolor}
\hypersetup{
	colorlinks=true,
	pdfpagemode=UseNone,
    citecolor=OliveGreen,
    linkcolor=RoyalBlue,
    urlcolor=black,
	pdfstartview=FitW
}
 
\usepackage{subcaption}
\usepackage{comment}

\usepackage{appendix}
\usepackage{multicol} 

\usepackage[tt=false]{libertine}
\usepackage[varqu,varl]{zi4}

\usepackage[amsthm]{libertinust1math} 

\usepackage{microtype}
\usepackage{graphicx}

\def\pr{\mathbb{P}}
\def\E{\mathbb{E}}

\newtheorem{thm}{Theorem}[section]
\newtheorem*{thm*}{Theorem}
\newtheorem{prop}[thm]{Proposition}
\newtheorem{lemm}[thm]{Lemma}

\theoremstyle{definition}
\numberwithin{equation}{section}

\newtheorem{defn}[thm]{Definition}

\title{Sample Path Large Deviations for Random Walks on Regular Trees}

\author{Jie Jiang}
\address{Jie Jiang: School of Statistics and Data Science, Nankai University, 94 Weijin Road, Nankai District, Tianjin, P.R. China} \email{2308053409(at)qq.com}

\author{Shuwen Lai}
\address{Shuwen Lai: School of Statistics and Data Science, Nankai University, 94 Weijin Road, Nankai District, Tianjin, P.R. China} \email{1120220052(at)mail.nankai.edu.cn}

\begin{document}

\begin{abstract}
This paper investigates the large deviation problem in the sample path space of the nearest-neighbor random walks on regular trees. 
 We establish the sample path large deviation principle for the law of the distance from a nearest random walk on a regular tree to the root with a good convex rate function. Furthermore, we derive an implicit expression for the rate function via the Fenchel-Legendre transform of the log-moment generating function.
\end{abstract}

\maketitle

\section{Introduction and Main Results}
Random walks (RWs) on graphs and groups have been one of the fascinating topics in probability theory over the past few decades. 
This field was initiated by Pólya's fascinating work \cite{Pol21} on the recurrence and transience of simple random walks on Euclidean lattices. 
Following this breakthrough, the mid-to-late 20th century witnessed an explosive development of RW theory across multiple dimensions including
\begin{itemize}
	\item Asymptotic properties: Quantitative analysis of speed, entropy and heat kernel decay;
	\item Structural connections: Interactions with geometric group theory and ergodic theory;
	\item Boundary theory: Compactification methods for RW convergence;
\end{itemize}
see \cite{Ave76,KV83,Var85,Lyo90,Gro92} and references therein.
For foundational learning on RWs, we refer readers to Woess' \cite{Woe00} and Lalley's \cite{Lal23}.

In this paper, we focus exclusively on random walks on regular trees, the Cayley graphs of the free product of two-element groups, and specifically on the study of their sample paths. The methods developed apply also, with some modifications, to random walks on free products of finitely generated groups.

Let $T_d$ be a $d$-regular tree with generating set $S=\{a_1,\ldots, a_d\}$ and root $e$, $d \geq 3$.  Let $\mu$ be a probability measure on $S$ and $ (Y_n)_{n\geq 0}$ a random walk on $T_d$ starting from $e$ with step distribution $\mu$, i.e., $Y_0=e$ and $Y_n= X_1 \cdots X_n$ for every $n\geq 1$, where the $X_n$ are independent $T_d$-valued random variables identically distributed according to $\mu$. 
Denote by $l$ the length function on $T_d$ determined by $S$, i.e., for $g \in T_d$, $l(g)$ is the graph distance from $g$ to $e$.

Using the Kingman's subadditive ergodic theorem \cite{Kin73}, we can see that  $(Y_n)_{n\geq 0}$ has a positive escape rate $c$. Specifically, the following strong law of large numbers (SLLN) holds:
\begin{equation*}
	\lim_{n\to\infty} \frac{l(Y_n)}{n} =\lim_{n\to\infty} \frac{\E[l(Y_n)]}{n} =c>0 \quad  \mathrm{a.s.}
\end{equation*}
In fact, Guivarc'h \cite{Gui80} proved a more general result: under the condition that step distribution has finite first moment with respect to the length function, the random walk on finitely generated group has a escape rate.

Next, Sawyer and Steger \cite{SS87} established the central limit theorem (CLT) for $l(Y_n)$. They showed that there exists some $\sigma^2 \geq 0$ such that
\begin{equation*}
	\lim_{n\to \infty} \frac{l(Y_n)-cn}{\sqrt{n}}  = N(0, \sigma^2) \quad  \text{in law}.
\end{equation*}
Moreover, Ledrappier provided a more geometric proof in his article \cite{Led01} and Bjorklund extended Ledrappier's argument to Gromov-hyperbolic groups and established the CLT for Green metric (\cite{GH13}).

The SLLN yields the escape rate of the RW, while the CLT further characterizes the "normal fluctuation" range of this rate. To analyze probabilities of "rare events" related to the escape rate, the Large Deviation Principle (LDP) becomes indispensable.  
Firstly, Lalley gave the asymptotic form of the transition probability of $(Y_n)$ \cite{Lal91} and then obtained a precise result on $(l(Y_n))$ \cite{Lal93}. 
Later, Corso \cite{Cor21} established the existence of the LDP of $(\frac{1}{n}l(Y_n))_{n\geq 1}$ with a good convex rate function. And he proved that the rate function coincides with the Fenchel-Legendre transform of the limiting logarithmic moment generating function of $(\frac{1}{n}l(Y_n))_{n\geq 1}$. 
Moreover, in \cite{LMW24}, Lai, Ma and Wang provided an alternative representation of the rate function.

While the LDP characterizes the tail behavior of rare events related to the escape rate, in many problems, the interest in rare events actually lies in probabilities about the whole sample path. 
  Bougerol and Jeulin \cite{BJ99} showed that if $(Y_n)$ is a simple random walk, then the conditional distribution of $( \frac{1}{\sqrt{n}}l(Y_{[nt]}), 0\leq t \leq 1)$ given $Y_0=Y_n=e$ converges to the Brownian excursion as $n\to\infty$.
  Also, Chen and Miermont \cite{CM17} proved that the range of a long Brownian bridge in the hyperbolic space converges after suitable renormalisation to the Brownian continuum random tree. 
   And in \cite{LMW24}, Lai, Ma and Wang proved that given the value of $l(Y_n)$, the value of $l(Y_k)$ at time $k$ is likely to be close to $\frac{k}{n}l(Y_n)$. 
   In our work, we establish the LDP about the sample path, namely the sample path LDP.

For the nearest random walk $(Y_n)_{n\geq 0}$ on the regular tree $T_d$, define
\begin{equation*}
  Z_n(t):=\frac{1}{n} l(Y_{[nt]}), \ 0\leq t \leq 1,
\end{equation*}
where $[c]$ is the integer part of $c$ and let $\mu_n$ be the law of $Z_n(\cdot)$ in $L_{\infty}([0,1])$. 

Denote by 
\begin{equation*}
	\mathcal{J} = \left\{ (t_1, t_2,\ldots, t_j)\in (0,1]^j: j \in \mathbb{N}, t_1<t_2<\cdots <t_j  \right\}
\end{equation*}
the collection of all ordered finite subsets of $(0,1]$. For any $J =(t_1, t_2,\ldots, t_j) \in \mathcal{J}$ and any $f \in  C_0([0,1])$, let $p_J(f) := (f(t_1), f(t_2), \ldots, f(t_{j}))$ and $\mu_n^J:=\mu_n \circ p_J^{-1}$,  i.e., $\mu_n^J$ is the law of the random vector $(Z_n(t_1),Z_n(t_2),\ldots, Z_n(t_{j}))$. 
The following Theorem \ref{thm-ldp} is our main result and it establishes the sample path LDP for $(Y_n)$ and gives an implicit expression of the rate function $I$. 

\begin{thm} \label{thm-ldp}
  The measures $(\mu_n )$ in $L_{\infty}([0,1])$  satisfy the LDP with the good rate function 
  \begin{equation*}
        I(f)= \begin{cases}
            \sup_{J\in \mathcal{J}} I_J(p_J(f)), & f \in  C_0([0,1])\\
            \infty, &  \text{otherwise}
        \end{cases}
    \end{equation*}
  where $I_J$ is the rate function of measures $(\mu_n^J)$. 
  
 Furthermore, $I_J$ is the Fenchel-Legendre transform of $\Lambda^J$ defined by
 \begin{equation*}
  	\Lambda^J(\lambda) := \lim_{n\to \infty} \frac{1}{n} \log \int_{ (\mathbb{R}_{\geq 0})^{j}}  e^{n \langle \lambda, x \rangle } d \mu_n^{J}(x),\quad \forall \lambda \in \mathbb{R}^{j}.
  \end{equation*}
 where $\langle \cdot , \cdot \rangle$ is the scalar product in $\mathbb{R}^{j}$. That is 
    \begin{equation*}
    I_J(x) = \sup_{\lambda \in \mathbb{R}^{j} } \{ \langle \lambda,x\rangle-\Lambda^J(\lambda)\} ,  \quad  \forall x \in (\mathbb{R}_{\geq 0})^{j}.
  \end{equation*}  
\end{thm}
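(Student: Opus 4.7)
My approach has three stages: establish finite-dimensional LDPs via Gärtner--Ellis, lift to a sample-path LDP via the Dawson--Gärtner projective-limit theorem, and upgrade from the projective-limit topology to the $L_\infty$ norm topology using exponential tightness. The Lipschitz structure of the distance process makes this final step essentially free.

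\textbf{Exponential tightness.} Because $(Y_n)$ is nearest-neighbor, $|l(Y_{m+1}) - l(Y_m)| = 1$, so $|Z_n(t) - Z_n(s)| \leq |t-s| + 1/n$ and $Z_n(0)=0$. Replacing $Z_n$ by its piecewise-linear interpolation $\tilde Z_n$ at the nodes $k/n$ gives $\|Z_n - \tilde Z_n\|_\infty \leq 1/n$, while $\tilde Z_n$ takes values deterministically in the set $K := \{f \in C_0([0,1]) : f(0) = 0, \mathrm{Lip}(f) \leq 1\}$, which is compact in $L_\infty$ by Arzelà--Ascoli. Hence $\mu_n$ and the law of $\tilde Z_n$ are exponentially equivalent and concentrated on $K$, yielding exponential tightness and forcing any rate function to be $+\infty$ off $C_0([0,1])$.

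\textbf{Finite-dimensional LDPs and projective limit.} Fix $J = (t_1,\ldots,t_j) \in \mathcal{J}$ and set $m_i = [n t_i]$ with $m_0 = 0$. The single-time limit $\Lambda^{(t_1)}(\lambda) = t_1 \Lambda^{(1)}(\lambda)$ reduces at once to Corso's result by the trivial time change $m/n \to t_1$. For $j \geq 2$, write $Y_{m_i} = Y_{m_{i-1}} W_i$ with independent increments $W_i = X_{m_{i-1}+1}\cdots X_{m_i}$; the joint MGF $\E[\exp(\sum_i \lambda_i l(Y_{m_i}))]$ can be decomposed by iterating the Markov property together with the free-product identity $l(Y_{m_{i-1}} W_i) = l(Y_{m_{i-1}}) + l(W_i) - 2 c_i$, where $c_i$ measures the cancellation between the suffix of $Y_{m_{i-1}}$ and the prefix of $W_i$. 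A regeneration (ladder-time) argument, combined with the positive escape rate of Guivarc'h, shows that the $c_i$ contribute only subexponentially, which gives the existence and essential smoothness of $\Lambda^J$, and Gärtner--Ellis then produces the LDP for $\mu_n^J$ with rate function $I_J = (\Lambda^J)^*$. The family $\{\mu_n^J\}_{J \in \mathcal{J}}$ is compatible under coordinate projections, so the Dawson--Gärtner theorem delivers an LDP for $\mu_n$ on $C_0([0,1])$ equipped with the pointwise-convergence topology, with rate function $I(f) = \sup_J I_J(p_J(f))$. Since pointwise and uniform convergence coincide on the compact set $K$, combining this with the exponential tightness of the previous paragraph upgrades the LDP to the $L_\infty$ norm topology, completing the proof of Theorem~\ref{thm-ldp}.

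\textbf{Main obstacle.} The analytic weight concentrates in the second stage. Unlike random walks on $\mathbb{Z}^d$, the length function on $T_d$ is not additive along group-product increments, so the joint log-MGF does not factorize cleanly; controlling the cancellations $c_i$ uniformly in $\lambda$ and showing their exponential negligibility at the $n$-scale is the heart of the argument. A secondary concern is verifying essential smoothness of $\Lambda^J$ (where one can fall back on Baldi's theorem if smoothness fails at the boundary), in order to fully apply Gärtner--Ellis rather than only its upper-bound half.
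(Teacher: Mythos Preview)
Your overall architecture---finite-dimensional LDP, Dawson--G\"artner, exponential tightness via the Lipschitz structure, upgrade to the sup-norm topology---is exactly the paper's, and the first and last stages are fine. The genuine gap is in the finite-dimensional step, and it is not the ``secondary concern'' you flag.

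The claim that the cancellations $c_i$ ``contribute only subexponentially'' is false. Positive speed is a law-of-large-numbers statement about \emph{typical} paths; the moment generating function integrates over all paths with exponential tilting, and for suitable $\lambda$ the dominant contribution comes precisely from paths with cancellation of order $n$. Concretely, take $J=(t_1,t_2)$, $\lambda_1=0$, $\lambda_2\to -\infty$: the left side is governed by $\pr(l(Y_{[nt_2]})=0)$, which forces $W_2$ to undo $Y_{[nt_1]}$ completely, i.e.\ $c_2\approx l(Y_{[nt_1]})$ of order $n$. So the joint log-MGF does \emph{not} decouple into a sum of one-block contributions up to $o(n)$, and you have no direct handle on the existence---let alone the differentiability---of $\Lambda^J$. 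Without that, G\"artner--Ellis gives you nothing beyond the (easy) upper bound.

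The paper inverts your order of operations to avoid this. It first proves the weak LDP for $\mu_n^J$ directly, by a path-concatenation lemma: decompose each path at the last/first visits to the common ancestor $y_1\wedge y_2$ of $Y_{[nt_1]}$ and $Y_{[nt_2]}$, pigeonhole on the segment lengths and endpoint letters to get a subset $\bm B$ of paths with controlled combinatorics, and splice $k$ such paths (with single-letter spacers) into one long path whose multi-time projections stay in a slightly larger ball. This yields approximate super-multiplicativity of $\mu_n^J(B(x,\rho))$, hence equality of the $\limsup$ and $\liminf$ local rates and the weak LDP. A second concatenation (one path near $x$, one near $y$) gives midpoint convexity of $I_J$. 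Only \emph{then}, with a good convex rate function already in hand, does the paper invoke the inverse Varadhan direction to conclude that $\Lambda^J$ exists and that $I_J=(\Lambda^J)^\ast$. In short: you try to compute $\Lambda^J$ and deduce the LDP; the paper proves the LDP combinatorially and deduces $\Lambda^J$.
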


\subsection{Overview of the proofs}
Similar to Mogulskii's theorem \cite[Theorem 5.1.2]{DZ09} on sample path LDP for RWs in $\mathbb{R}^d$,  let $\tilde{\mu}_n$ be the law of $\tilde{Z}_n(\cdot)$ in $L_{\infty}([0,1])$, where
\begin{equation*}
\tilde{Z}_n(t) := Z_n(t) + \left(t - \frac{[nt]}{n} \right)\left( l(Y_{[nt]+1}) - l(Y_{[nt]}) \right)
\end{equation*}	
is the polygonal approximation of  $Z_n(\cdot)$. Thus $\tilde{Z}_n(\cdot)$ is absolutely continuous and differentiable almost everywhere, and $\tilde{Z}_n(\cdot)$ is equal to $Z_n(\cdot)$ at the quantiles.  Furthermore, since $\|Z_n-\tilde{Z}_n\| \leq \frac{1}{n}$, we can obtain the exponential equivalence of the probability measures $(\mu_n)$ and $(\tilde{\mu}_n)$.

The establishment of sample path LDP mainly depends on the results of LDP for projective limits.
We show that the finite-dimensional projection $p_J(Z_n(\cdot))$ satisfies the LDP with a convex rate function $I_J$, which is the Fenchel-Legendre transform of the limit log-moment generating function $\Lambda^{J}$ of $ \mu_n^{J}$ (the existence of $\Lambda^{J}$ has also been proved). 
We prove the LDP for finite-dimensional projections by combining the weak LDP with the exponential tightness. For the proof of the weak LDP, we find out the typical path subset of the RW through several key time points, and split and reorganize the trajectories based on this subset. This differs from the method of direct geodesic concatenation technique for RWs  adopted by Corso \cite{Cor21}, thus avoiding the backsliding problem in the splicing process at multiple time points.

Due to the exponential equivalence of the probability measures $(\mu_n)$ and $(\tilde{\mu}_n)$ , the finite-dimensional projection $p_J(\tilde{Z}_n(\cdot))$ also satisfies the LDP with a convex rate function. 
Then by the Dawson-G{\"a}rtner theorem and the theorem on extension and restriction about function spaces and topologies, we get the LDP for $\tilde{Z}_n(\cdot)$.  Therefore the desired LDP for $Z_n(\cdot)$ can be obtained once again by the exponential equivalence.


\subsection{Outline of the article}
The paper is organized as follows. In section 2, we recall some results about RWs on regular trees, LDP and the sample path LDP, as well as some methods and tools in large deviation theory. Then, we complete the proof of the main result in section 3. Finally, in section 4, we give an example on the simple random walk and list list some questions about our results and propose  potential improvements and extensions for future research.

\section{Preliminaries}
In this section, we review some notations and results about random walks on regular tress and the large deviations. The readers are referred to \cite{Woe00,Lal23}  for a general introduction to random walks, and to \cite{DZ09} for an introduction of large deviations.

\subsection{Random walks on regular trees}
Let $G$ be the free product of $d$ two-element groups $\mathbb{Z}_2$, $d\geq 3$. That is, $G= (\mathbb{Z}_2)^{\ast d}$ can be represented by the set of all finite reduced words (including the empty word) from the alphabet $S=\{a_1,\ldots,a_d\}$ (a word is reduced if no letter $a_1,\ldots,a_d$ is adjacent to itself). Group multiplication in $G$ is concatenation followed by reduction, and the group identity $e$ is the empty word. Thus 
    for each $g=a_{i_1}\cdots a_{i_m} \in G$, $a_{i_j} \neq a_{i_{j+1}}$ for $j=1,\ldots,m-1$ and its inverse $g^{-1} = a_{i_{m}} a_{i_{m-1}} \cdots a_{i_1}$, i.e., $a_i^{-1}=a_i$ for $1\leq i\leq d$.

We denote by $l(g)$ the word length of $g \in G$ determined by the alphabet $S$, in particular, $l(g)$ is length of its representative word. We always assume that  $l(e)=0$. Then $l$ is a length function, meaning that it satisfies:
\begin{itemize}
  \item $l(g) \geq 0$ for all $g \in G$, and  $l(g)=0$ if and only if  $g=e$,
  \item $l(g)=l(g^{-1})$ for all  $g \in G$,
  \item $l(g_1 g_2) \leq l(g_1) + l(g_2)$, and equality holds if and only if the last letter of $g_1$ is different from the first letter of $g_2$. 
\end{itemize}

The Cayley graph of $(G,S)$ is a $d$-regular tree $T_d$. That is, the group identity $e$ is the root, each element $g \in G$ is an vertex and there is an edge between vertices $g_1, g_2$ if and only if $g_1^{-1} g_2 \in S$. In this paper, we shall not distinguish between group elements in $G$ and vertices in $T_d$. Thus for $g \in T_d$,  $l(g)$ is the graph distance between $g$ and the root.

Let $\mu=\{ p_i \}_{1 \leq i \leq d}$ be a probability distribution on $S$ with  $\mu(a_i)=p_i >0$ for each $1 \leq i \leq d$. We call $(Y_n)_{n \geq 0}$ a random walk (RW) on $T_d$ starting from $e$ with step distribution $\mu$ if $(Y_n)_{n \geq 0}$ is a Markov chain taking values in $T_d$ with transition probabilities 
 \begin{equation}
  \pr(Y_{n+1}=ga_i|Y_n=g)= p_i  \quad n\geq 0,
\end{equation}
and $Y_0 = e$. It is easy to see that  $(Y_n)_{n \geq 0}$ is a nearest neighbor  random walk. For convenience, we often think of a random walk in another way, as the multiplication of independent and identically distributed random variables on $G$. Let $\{X_n\}_{n \geq 1}$ be a sequence of independent and identically distributed random variables taking values in $S$ with law $\mu$. Then the random walk $(Y_n)_{n \geq 0}$ on $T_d$ starting from $e$ is 
\begin{equation*}
Y_n := \Pi_{i=1}^n X_i
\end{equation*}
with $Y_0 = e$.

\subsection{Large deviation and sample path large deviation }
In this part we recall some notions of large deviations and sample path large deviation. 

Throughout this subsection, $X$ denotes a locally convex Hausdorff topological space, endowed with the Borel $\sigma$-algebra $\mathcal{B}$ and $I: X \rightarrow [0,\infty]$ is a lower semicontinuous mapping, in particular, for all $\alpha \in [0, \infty]$, the level set $\Psi_{I}(\alpha):=\{x: I(x) \leq \alpha \}$ is a closed subset of $X$.  Additionally, if all the level sets $\Psi_{I}(\alpha)$ are compact subsets of $X$, we say $I$ is good.
The effective domain of $I$ is the set $D_I :=\{ x: I(x) <\infty \}$. 
 Let $(\mu_n )_{n\geq 1}$ be a sequence of Borel probability measures on $X$.

\begin{defn}
	We say that the probability measures sequence $( \mu_{n} )$  satisfies the large deviation principle, abbreviated as LDP, with a rate function $I$ if, for all $\Gamma \in \mathcal{B}$,
	\begin{equation}\label{defn-LDP}
		-\inf \limits_{x \in \Gamma^{o}}  I(x) \leq \liminf\limits_{n \to \infty}\frac{1}{n} \log \mu_{n}(\Gamma) \leq \limsup\limits_{n \to \infty}\frac{1}{n} \log \mu_{n}(\Gamma) \leq -\inf \limits_{x \in \overline{\Gamma}} I(x).
	\end{equation}	
	where $\Gamma^{o}$ and $\overline{\Gamma}$ are the interior and closure of $\Gamma$, respectively.
\end{defn}

After defining the LDP, a natural question is whether the rate function is unique, and the answer is yes due to the result of \cite[Lemma 4.1.4]{DZ09}.

 The LDP helps us to consider the tail behavior of many rare events associated with various sorts of empirical methods. However, in many problems, we are more concerned with rare events that depend on a set of random variables, or more generally, on a random process. Interest often lies in the probability that a path of a random process hits a particular set. Thus this kind of event requires us to consider the trajectory of the random process as a whole. 

Let $(Y_n)$ be a random walk on $T_d$ starting from $e$ with step distribution $\mu$. Define 
 \begin{equation}
  Z_n(t):=\frac{1}{n} l(Y_{[nt]}), \ 0\leq t \leq 1,
\end{equation}
where $[c]$ is the integer part of $c$. It is easy to verify that $Z_n(\cdot) \in L_{\infty}([0,1])$, where $L_{\infty}([0,1])$ is endowed with the supremum norm $\| \cdot \| $ and the supremum norm topology. Let $\mu_n$ be the law of  $Z_n(\cdot)$ in $L_{\infty}([0,1])$.
\begin{defn}
The sample path large deviation for random walk $(Y_n)$, abbreviated as sample path LDP,  is the large deviation for probability measures sequence $(\mu_n)$ in $L_{\infty}([0,1])$.
\end{defn}

\subsection{Some tools for LDP and sample path LDP}
For convenience, we often use a equivalent form of \eqref{defn-LDP}:
\begin{itemize}
        \item[(a)](Upper Bound) For every  $\alpha < \infty$ and every $\Gamma \in \mathcal{B}$ with  $\overline{\Gamma} \subset  X \setminus \Psi_{I}(\alpha)$,
        \begin{equation}\label{defn-LDP-upperbound}
            	\limsup\limits_{n\to \infty} \frac{1}{n} \log \mu_{n}(\Gamma) \leq -\alpha.
             \end{equation}
        \item[(b)](Lower Bound) For every $x \in D_I$ and every $\Gamma \in \mathcal{B}$ with $x \in \Gamma^{\circ}$,
         \begin{equation}\label{defn-LDP-lowerbound}
         	\liminf\limits_{n\to \infty} \frac{1}{n} \log \mu_{n}(\Gamma) \geq -I(x).
         \end{equation}
\end{itemize}	

Often a natural approach to prove the LDP is to first prove the upper bound and the lower bound in weaker forms (weak LDP) and then use the exponentially tightness of the probability measures.

\begin{defn}
We say that $( \mu_{n} )$ satisfies the weak large deviation principle, abbreviated as  weak LDP, with a rate function $I$ if the upper bound \eqref{defn-LDP-upperbound} holds for every $\alpha < \infty$ and all compact subsets of $ X  \backslash\Psi_{I}(\alpha)$, and the lower bound \eqref{defn-LDP-lowerbound} holds for all measurable sets.
\end{defn}

\begin{defn}
	We say that $ (\mu_{n})$ on $X$ is \textbf{exponentially tight} if for every $0 \leq \alpha < \infty$, there exists a compact set $K_{\alpha} \subset X$ such that 
	\begin{equation*}
  	\limsup_{n\to \infty} \frac{1}{n} \log \mu_{n}( X \backslash K_{\alpha} ) < -\alpha.
  \end{equation*}
\end{defn}

Then we can strengthen a weak LDP into a full LDP by the exponentially tightness.
\begin{prop}  \cite[Lemma 2.5]{LS87}\cite[Lemma 1.2.18]{DZ09} \label{thm-Dembo-lem1.2.18}
	Let $(\mu_{n})$ be an exponentially tight family on $X$. If  $(\mu_{n})$ satisfies the weak LDP with the rate function $I$, then $(\mu_{n})$ satisfies the LDP with the rate function $I$ and $I$ is a good rate function.
\end{prop}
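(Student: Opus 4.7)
The proof will split into two pieces corresponding to what must be strengthened when passing from a weak LDP to a full LDP: first, I will show that $I$ has compact level sets (hence is good); second, I will upgrade the compact-set upper bound to the full closed-set upper bound. The lower bound requires no work, since the weak LDP lower bound \eqref{defn-LDP-lowerbound} already holds on every measurable set $\Gamma$ for every $x \in \Gamma^{\circ} \cap D_I$, and taking the supremum over such $x$ gives $\liminf_n \frac{1}{n}\log\mu_n(\Gamma) \geq -\inf_{x \in \Gamma^{\circ}} I(x)$ (points of $\Gamma^{\circ}$ outside $D_I$ make no contribution to this infimum).

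\textbf{Goodness of $I$.} Fix $\alpha < \infty$ and let $K_\alpha$ be the compact set from exponential tightness, so $\limsup_n \frac{1}{n}\log \mu_n(X \setminus K_\alpha) < -\alpha$. I claim $\Psi_I(\alpha) \subset K_\alpha$. Suppose for contradiction that some $x \in \Psi_I(\alpha) \setminus K_\alpha$ exists. Since $X$ is Hausdorff, $K_\alpha$ is closed, so $G := X \setminus K_\alpha$ is an open neighborhood of $x$. The weak LDP lower bound applied to $G$ yields
\[
\liminf_n \tfrac{1}{n}\log \mu_n(G) \geq -I(x) \geq -\alpha,
\]
contradicting $\limsup_n \frac{1}{n}\log \mu_n(G) < -\alpha$. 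Therefore $\Psi_I(\alpha) \subset K_\alpha$, and since $\Psi_I(\alpha)$ is closed by lower semicontinuity of $I$, it is a closed subset of a compact Hausdorff space, hence compact.

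\textbf{Full upper bound.} Let $F \subset X$ be closed and fix $\alpha < \infty$. Split the mass as
\[
\mu_n(F) \leq \mu_n(F \cap K_\alpha) + \mu_n(X \setminus K_\alpha).
\]
Since $F$ is closed and $K_\alpha$ is compact, $F \cap K_\alpha$ is compact. To apply the weak upper bound to it, I will use the equivalence (valid because $(F \cap K_\alpha) \cap \Psi_I(\beta) = \emptyset$ precisely when $\inf_{x \in F \cap K_\alpha} I(x) > \beta$) that the weak LDP upper bound yields
\[
\limsup_n \tfrac{1}{n}\log \mu_n(F \cap K_\alpha) \leq -\inf_{x \in F \cap K_\alpha} I(x) \leq -\inf_{x \in F} I(x).
\]
Combining the two estimates by the elementary inequality $\limsup_n \frac{1}{n}\log(a_n + b_n) \leq \max(\limsup_n \frac{1}{n}\log a_n, \limsup_n \frac{1}{n}\log b_n)$ gives
\[
\limsup_n \tfrac{1}{n}\log \mu_n(F) \leq \max\!\left(-\inf_{x \in F} I(x),\ -\alpha\right),
\]
and letting $\alpha \to \infty$ delivers the desired closed-set upper bound.

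\textbf{Main obstacle.} There is no deep obstacle here; the whole argument is a soft one whose only real ingredient is the ability to localize probabilistic asymptotics onto a compact set using exponential tightness. The subtlest moment is the translation of the weak upper bound (phrased through level sets $\Psi_I(\alpha)$) into the more usable statement $\limsup_n \frac{1}{n}\log \mu_n(K) \leq -\inf_{x \in K} I(x)$ for compact $K$, together with care that exponential tightness provides a \emph{strict} inequality, which is exactly what is needed to derive the contradiction establishing $\Psi_I(\alpha) \subset K_\alpha$. Once these bookkeeping points are handled, both goodness of $I$ and the closed-set upper bound drop out by cleanly separating $\mu_n$ into a compact part and a negligible tail.
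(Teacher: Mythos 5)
Your proof is correct: the paper does not prove this proposition itself (it is quoted from \cite[Lemma 2.5]{LS87} and \cite[Lemma 1.2.18]{DZ09}), and your argument is essentially the standard one given there — use the weak lower bound on the open set $X\setminus K_\alpha$ to force $\Psi_I(\alpha)\subset K_\alpha$ (goodness), and split a closed set as $F\cap K_\alpha$ plus the exponentially negligible tail to upgrade the compact-set upper bound to closed sets. The only cosmetic imprecision is the phrase ``precisely when'' relating $(F\cap K_\alpha)\cap\Psi_I(\beta)=\emptyset$ to $\inf_{F\cap K_\alpha}I>\beta$; the implication you actually need, followed by letting $\beta\uparrow\inf_{F\cap K_\alpha}I$, is exactly what your argument uses, so the proof stands.
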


In order to verify the weak LDP, we often use the following proposition without knowing the rate function in advance.
\begin{prop} \cite[Theorem 4.1.11]{DZ09} \label{thm-Dembo-thm4.1.11}
 Let $( \mu_{n} )$ be a sequence of Borel probability measures on $X$. For every $x\in X$, define the function $I: X \rightarrow [0,\infty]$ by
 \begin{equation*}
	I(x) := \sup_{V open: \, x \in V } -\liminf_{n\to \infty } \frac{1}{n} \log \mu_{n}(V).
\end{equation*}
Then $I$ is lower semicontinuous. Moreover, if for every $x\in X$, 
\begin{equation*}
	I(x) = \sup_{V open: \, x \in V } -\limsup_{n\to \infty } \frac{1}{n} \log \mu_{n}(V),
\end{equation*}
then the sequence $( \mu_{n} )$ satisfies the weak LDP with rate function $I$.
\end{prop}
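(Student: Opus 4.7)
The plan is to follow the roadmap in the overview. First I would reduce the problem via the polygonal approximation $\tilde{Z}_n$: since $\|Z_n-\tilde{Z}_n\|_\infty \leq 1/n$ deterministically, the laws $(\mu_n)$ and $(\tilde{\mu}_n)$ are exponentially equivalent, so by \cite[Theorem 4.2.13]{DZ09} it suffices to establish the LDP for $(\tilde{\mu}_n)$. I would then lift the problem to a projective-limit one: if for every $J=(t_1,\ldots,t_j)\in\mathcal{J}$ the finite-dimensional projections $p_J(\tilde{Z}_n)$ satisfy an LDP on $(\mathbb{R}_{\geq 0})^j$ with rate function $I_J$, then the Dawson-G\"artner theorem \cite[Theorem 4.6.1]{DZ09} yields an LDP for $(\tilde{\mu}_n)$ on the projective limit $C_0([0,1])$ (endowed with the topology of pointwise convergence on $(0,1]$) with rate function $f\mapsto \sup_J I_J(p_J(f))$. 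An extension/restriction argument \cite[Lemma 4.1.5]{DZ09} transfers this LDP to $L_\infty([0,1])$ with the supremum topology by setting the rate function to $+\infty$ outside $C_0([0,1])$; the Lipschitz-type bound $|\tilde{Z}_n(t)-\tilde{Z}_n(s)|\leq |t-s|$ confines the mass to an equicontinuous family, on which the pointwise and supremum topologies coincide, which is what makes the transfer legitimate.

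The core step is thus the finite-dimensional LDP. Fix $J=(t_1,\ldots,t_j)$. For each $\lambda\in\mathbb{R}^j$, existence of $\Lambda^J(\lambda)$ is obtained by writing $\langle\lambda,p_J(Z_n(\cdot))\rangle$ as a telescoping sum of the increments $l(Y_{[nt_i]})-l(Y_{[nt_{i-1}]})$ (with $t_0=0$), which by the splicing rule for the length function on $T_d$ differ from $l(Y_{[nt_{i-1}]}^{-1}Y_{[nt_i]})$ only by the cancellation at the splice point. Since the blocks $Y_{[nt_{i-1}]}^{-1}Y_{[nt_i]}$ are i.i.d.\ copies of random-walk pieces of length $\approx n(t_i-t_{i-1})$, the one-dimensional limit $\Lambda(\lambda)=\lim_n \tfrac{1}{n}\log\mathbb{E}[e^{\lambda l(Y_n)}]$ from \cite{Cor21} factorises, and controlling the cancellation yields
\[
\Lambda^J(\lambda)=\sum_{i=1}^{j}(t_i-t_{i-1})\,\Lambda\!\Bigl(\textstyle\sum_{k=i}^{j}\lambda_k\Bigr).
\]
To verify the weak LDP for $\mu_n^J$ I would apply Proposition \ref{thm-Dembo-thm4.1.11} with the candidate rate function $I_J(x)=\sup_\lambda\{\langle\lambda,x\rangle-\Lambda^J(\lambda)\}$, and reduce the upper and lower bounds on $\mu_n^J(V)$ for open $V$ to finding trajectories with prescribed increments. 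Here I would implement the typical-path decomposition from the overview: intersect the event $\{p_J(Z_n)\in V\}$ with the set of trajectories whose walk position at each $[nt_i]$ lies in a fixed cone of $T_d$ at depth $\approx nx_i$, decompose the trajectory into $j$ independent blocks, and reassemble it without backsliding. The one-dimensional LDP of \cite{Cor21,LMW24} controls each block, and concatenation yields matched bounds. Exponential tightness is immediate since $Z_n(t)\in[0,1]$, so Proposition \ref{thm-Dembo-lem1.2.18} upgrades the weak LDP to a full LDP with good convex $I_J$; its identification as $(\Lambda^J)^*$ follows from the G\"artner-Ellis theorem \cite[Theorem 2.3.6]{DZ09}, using that $\Lambda^J$ inherits finiteness and essential smoothness on $\mathbb{R}^j$ from the one-dimensional case.

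The principal obstacle I anticipate lies in the matching lower bound for $\mu_n^J(V)$, specifically in avoiding backsliding pathologies. Conditionally on $Y_{[nt_{i-1}]}$, the future walk must reach depth $\approx nx_i$ at time $[nt_i]$; naive geodesic concatenation (as in Corso's single-time argument) can introduce a cancellation $2k$ at each splice point which pushes the walk to depth strictly less than $nx_i$. The remedy is to condition on the first letter of each block being different from the last letter of the previous one, which removes the cancellation; as this conditioning is restricted to an event of polynomial probability in $n$, it has no effect on the logarithmic asymptotics. A secondary subtlety is the compatibility between the projective-limit topology and the supremum topology on $L_\infty([0,1])$, which I would handle by exploiting that the relevant rate function is finite only on the asymptotically $1$-Lipschitz family $C_0([0,1])$, making the two topologies agree on every level set.
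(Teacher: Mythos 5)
Your proposal does not address the statement it is supposed to prove. Proposition \ref{thm-Dembo-thm4.1.11} is a general fact about an arbitrary sequence of Borel probability measures on a topological space $X$: (i) the function $I(x)=\sup_{V \ni x \text{ open}}\bigl(-\liminf_{n}\frac{1}{n}\log\mu_n(V)\bigr)$ is lower semicontinuous, and (ii) if the same supremum taken with $\limsup$ in place of $\liminf$ coincides with $I$ at every point, then $(\mu_n)$ satisfies the weak LDP with rate function $I$. What you have written is instead an outline of the proof of the paper's main result, Theorem \ref{thm-ldp} (exponential equivalence of $(\mu_n)$ and $(\tilde\mu_n)$, finite-dimensional LDPs, Dawson--G\"artner, transfer to $L_\infty([0,1])$), and it even invokes Proposition \ref{thm-Dembo-thm4.1.11} as an ingredient in the middle of that outline. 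Using the proposition as a tool cannot establish the proposition itself, and nothing in your text touches either the lower semicontinuity claim or the upper/lower bounds required by the weak LDP; the statement has nothing specific to do with random walks on trees (in the paper it is quoted from \cite[Theorem 4.1.11]{DZ09} without proof).

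A direct argument is short. For lower semicontinuity: if $I(x)>\alpha$, there is an open $V\ni x$ with $-\liminf_{n}\frac{1}{n}\log\mu_n(V)>\alpha$, and every $y\in V$ then satisfies $I(y)\geq-\liminf_{n}\frac{1}{n}\log\mu_n(V)>\alpha$, so $\{I>\alpha\}$ is open, i.e.\ the level sets $\Psi_I(\alpha)$ are closed. The lower bound \eqref{defn-LDP-lowerbound} for a measurable $\Gamma$ with $x\in\Gamma^{\circ}$ is immediate from the definition of $I$, since $\Gamma^{\circ}$ is itself an admissible open neighborhood of $x$, giving $\liminf_{n}\frac{1}{n}\log\mu_n(\Gamma)\geq\liminf_{n}\frac{1}{n}\log\mu_n(\Gamma^{\circ})\geq-I(x)$. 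For the upper bound on a compact set $K\subset X\setminus\Psi_I(\alpha)$: each $x\in K$ has $I(x)>\alpha$, so by the hypothesis (the $\limsup$ form of $I$) there is an open $V_x\ni x$ with $\limsup_{n}\frac{1}{n}\log\mu_n(V_x)<-\alpha$; extracting a finite subcover $V_{x_1},\dots,V_{x_N}$ of $K$ and using $\mu_n(K)\leq\sum_{i=1}^{N}\mu_n(V_{x_i})$ yields $\limsup_{n}\frac{1}{n}\log\mu_n(K)\leq\max_{1\leq i\leq N}\limsup_{n}\frac{1}{n}\log\mu_n(V_{x_i})\leq-\alpha$. None of these steps appear in your proposal, so as a proof of Proposition \ref{thm-Dembo-thm4.1.11} it has a complete gap.
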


Now we have tools to prove the LDP. It is known that the rate function of a sequence of i.i.d. random variables in empirical measure is the Fenchel-Legendre transform of log-moment generating function. For the general case, there are similar results. 

Let $X^{*}$  be the topological dual of $X$ and define the log-moment generating function of the measure $\mu$ on $X$ as the function $\Lambda_{\mu}$: $X^{*} \to (-\infty, \infty]$ given by
\begin{equation*}
	\Lambda_{\mu}(\lambda) :=\log \int_{X} e^{\langle\lambda, x \rangle}  \mu(dx) ,  \ \lambda \in X^{*}
\end{equation*}
where $\langle \cdot, \cdot \rangle$ is the standard dual pairing between $X^{*}$ and $X$. For a sequence of probability measures $(\mu_n)$, let
\begin{equation*}
	\bar{\Lambda} (\lambda) := \limsup_{n \to \infty} \frac{1}{n} \Lambda_{\mu_{n}} (n \lambda ), \quad \lambda \in X^{*}
\end{equation*}
and use the notation $\Lambda (\lambda)$ if the limit exists.

For $f : X \to (-\infty, \infty]$, not identically infinite, define its Fenchel-Legendre transform $f^{*} : X^{*} \to (-\infty, \infty]$ as
\begin{equation*}
    f^{*}(\lambda) =\sup_{x \in X} \{ \langle \lambda, x\rangle -f(x)\}, \  \varphi \in X^{*}.
\end{equation*}
In particular, if  $g: X^{*} \rightarrow (-\infty,\infty]$ is defined on the dual space $X^{*} $, we view its Fenchel-Lengendre transform $g^{*}$ as a function on $X$ rather than on $X^{\ast \ast}$.

The following Proposition \ref{thm-Dembo-thm4.5.10} is a consequence of Varadhan's integral lemma \cite{VS66} and Fenchel-Moreau's duality theorem  \cite[Theorem 1.11]{BB11} and it provides an expression of the rate function.

\begin{prop} \cite[Theorem 4.5.10]{DZ09} \label{thm-Dembo-thm4.5.10}
Let $(\mu_n )$ be a sequence of probability measures  on $X$ and satisfy the following conditions:
\begin{itemize}
  \item[\rm{1.}] For each $\lambda \in X^{*}$, $\bar{\Lambda}(\lambda)<\infty$,
  \item[\rm{2.}] $(\mu_n )$ satisfies the LDP with a good, convex rate function $I$.
\end{itemize}
Then for each $\lambda \in X^{*}$,the limit $\Lambda(\lambda) := \lim_{n \to \infty}  \Lambda_{\mu_{n}}(n \lambda )/n  $ exists, is finite, and the rate function $I$ and $\Lambda$ are the Fenchel-Legendre transform of each other, namely,
\begin{align*}
  I(x) &= \Lambda^{\ast}(x) =\sup_{\lambda \in X^{\ast} } \{ \langle \lambda, x\rangle-\Lambda(\lambda)\} ,  \  \forall x \in X \\
  \Lambda(\lambda) &= I^{\ast} (\lambda)=\sup_{x\in X} \{\langle \lambda, x\rangle -I(x) \}, \ \forall \lambda \in X^{*}.
\end{align*}
\end{prop}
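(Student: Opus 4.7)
The plan is to prove Theorem~\ref{thm-ldp} in three stages: (i) establish the LDP for each finite-dimensional projection $\mu_n^J$ with a good \emph{convex} rate function $I_J$; (ii) apply Proposition~\ref{thm-Dembo-thm4.5.10} to identify $I_J$ with the Fenchel-Legendre transform of $\Lambda^J$ and, as a byproduct, obtain existence of that limit; (iii) lift the finite-dimensional LDPs to the sample-path LDP on $L_\infty([0,1])$ via the polygonal approximation and the Dawson-G\"artner projective-limit theorem.

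Stage (i) is the crux. Fix $J=(t_1,\dots,t_j)\in\mathcal{J}$. Exponential tightness of $(\mu_n^J)$ is immediate since $\|Z_n\|_\infty\leq 1$ forces the laws into the compact set $[0,1]^j$. To establish the weak LDP I would use Proposition~\ref{thm-Dembo-thm4.1.11}, which reduces matters to computing $-\lim_n n^{-1}\log\mu_n^J(V)$ for shrinking open neighbourhoods $V$ of a target $x=(x_1,\dots,x_j)$. The key estimate exploits the last-common-prefix structure of $T_d$: at each intermediate time $[nt_k]$ the reduced word $Y_{[nt_k]}$ shares a longest common prefix with $Y_{[nt_{k+1}]}$, and conditioning on the lengths of these prefixes decomposes the trajectory into pieces whose step-distribution contributions factor. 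This is the split-and-reorganize method advertised in the overview as a replacement for Corso's direct geodesic concatenation, and it avoids the backtracking that naive splicing at multiple intermediate times would generate. The resulting log-probability reduces to a sum of single-increment large-deviation costs controlled by Corso's one-dimensional rate function \cite{Cor21}. Combining the weak LDP with exponential tightness via Proposition~\ref{thm-Dembo-lem1.2.18} yields the full LDP with a good rate function $I_J$, and a standard concatenation/time-rescaling argument shows $I_J$ is convex.

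Stage (ii) then runs mechanically, and is exactly what supplies the Fenchel-Legendre identity in the second half of the theorem. The hypothesis $\bar\Lambda^J(\lambda)<\infty$ for every $\lambda\in\mathbb{R}^j$ is immediate from $\|Z_n\|_\infty\leq 1$, which gives $\Lambda_{\mu_n^J}(n\lambda)\leq n\|\lambda\|_1$. Combined with the good convex rate function produced in stage (i), Proposition~\ref{thm-Dembo-thm4.5.10} yields existence of the limit $\Lambda^J(\lambda)=\lim_n n^{-1}\log\int_{(\mathbb{R}_{\geq 0})^j} e^{n\langle\lambda,x\rangle}\,d\mu_n^J(x)$ and the duality $I_J(x)=\sup_{\lambda\in\mathbb{R}^j}\{\langle\lambda,x\rangle-\Lambda^J(\lambda)\}$, which is precisely the second assertion of the theorem.

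Stage (iii) lifts to the path level. Exponential equivalence $\|Z_n-\tilde Z_n\|_\infty\leq 1/n$ reduces the problem to $(\tilde\mu_n)$; the finite-dimensional LDPs from stage (i) transfer to $\tilde Z_n$ by the same equivalence, and Dawson-G\"artner applied on the projective limit $\mathbb{R}^{(0,1]}$ produces an LDP with rate function $f\mapsto\sup_{J\in\mathcal{J}} I_J(p_J(f))$. The Lipschitz bound $|\tilde Z_n(t)-\tilde Z_n(s)|\leq|t-s|$ and Arzel\`a-Ascoli deliver exponential tightness in $C_0([0,1])$ under the supremum-norm topology, so a standard restriction/extension argument transfers the LDP from the projective-limit topology to $L_\infty([0,1])$ with rate function $+\infty$ outside $C_0([0,1])$, and one last use of exponential equivalence returns the LDP for $(\mu_n)$. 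The main obstacle throughout is the weak LDP of stage (i): both the upper and lower bounds on $\mu_n^J(V)$ require a combinatorial path-counting argument on $T_d$ that simultaneously honours all $j$ distance constraints, and producing a decomposition into independent pieces without introducing backtracking at the junction times is precisely where Corso's one-time-point splicing fails and where the typical-path-through-several-key-time-points construction highlighted in the overview must do its real work.
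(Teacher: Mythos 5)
There is a genuine gap, and it is structural: your proposal does not prove the statement at all. The statement to be proved is Proposition~\ref{thm-Dembo-thm4.5.10} itself (the abstract duality result: under exponential-moment finiteness and an LDP with a good convex rate function, the limit $\Lambda(\lambda)=\lim_n \Lambda_{\mu_n}(n\lambda)/n$ exists and $I$ and $\Lambda$ are mutual Fenchel-Legendre transforms). Your three-stage plan instead sketches the proof of Theorem~\ref{thm-ldp}, and in stage (ii) you explicitly \emph{invoke} Proposition~\ref{thm-Dembo-thm4.5.10} as a black box (``stage (ii) then runs mechanically''). Using a statement is not proving it, so nothing in the proposal supplies the required argument. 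For the record, the paper does not reprove it either: it is quoted from \cite[Theorem 4.5.10]{DZ09}, with the remark that it follows from Varadhan's integral lemma \cite{VS66} together with the Fenchel--Moreau duality theorem \cite[Theorem 1.11]{BB11}.

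If you wanted to actually prove the proposition, the route is: fix $\lambda\in X^{*}$ and apply Varadhan's integral lemma to the continuous functional $x\mapsto\langle\lambda,x\rangle$; the moment condition needed there is exactly supplied by hypothesis 1 applied to $\gamma\lambda$ for some $\gamma>1$, since $\bar{\Lambda}(\gamma\lambda)<\infty$. Varadhan's lemma then gives that $\lim_n \Lambda_{\mu_n}(n\lambda)/n$ exists and equals $\sup_{x\in X}\{\langle\lambda,x\rangle-I(x)\}=I^{*}(\lambda)$, which is finite because $\bar{\Lambda}(\lambda)<\infty$ and because $\inf_X I=0$ (the LDP upper bound on $\Gamma=X$ plus goodness), so $\Lambda=I^{*}$. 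For the reverse identity one cannot argue ``mechanically'': one needs that $I$ is proper, convex, and lower semicontinuous on the locally convex space $X$, whereupon the Fenchel--Moreau theorem yields $I=I^{**}=\Lambda^{*}$. This is precisely where hypothesis 2 (convexity and goodness of $I$) enters and why it cannot be dropped; without convexity one only obtains $\Lambda^{*}$ as the convex regularization of $I$, not $I$ itself. None of these steps appear in your proposal, so as an answer to the stated problem it is off target, even though as a high-level outline of Theorem~\ref{thm-ldp} it matches the paper's actual strategy reasonably well.
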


The next two lemmas deals with the restriction and extension of the LDP, the first of which deals with the value space and the second with the topology. Thus under appropriate conditions, the LDP can be proved in a simple situation and then effortlessly transferred to a more complex one.

\begin{lemm} \cite[Lemma 4.1.5]{DZ09} \label{thm-Dembo-lem4.1.5}
	Let $H$ be a measurable subset of $X$ and $( \mu_n )$  a sequence of probability measures on $X$ with $\mu_{n}(H)=1$ for all $n\geq 1$. Suppose that $H$ is equipped with the topology induced by $X$.
	\begin{itemize}
    \item[(1)] If $H$ is a closed subset of $X$ and $(\mu_{n})$ satisfies the LDP in $H$ with rate function $I$, then  $(\mu_{n})$ satisfies the LDP in $X$ and now the rate function is $I'$:
    \begin{align*}
        I'(x)=\begin{cases}
            I(x),& x \in H \\
            \infty, & x \in X \backslash H
        \end{cases}
    \end{align*}
    \item[(2)] If $(\mu_{n})$ satisfies the LDP in $X$ with rate function $I$ and $D_I \subset H$, then the same LDP holds in $H$.  In particular, if $H$ is a closed subset of $X$, then $D_I \subset H$ and hence the LDP holds in $H$.\end{itemize}	
\end{lemm}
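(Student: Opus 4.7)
The plan is to leverage two elementary observations throughout: the measure-theoretic fact that $\mu_n(A) = \mu_n(A \cap H)$ for every Borel $A \subset X$ (consequence of $\mu_n(H)=1$), and the subspace-topology description, whereby sets closed (resp.\ open) in $H$ are exactly intersections with $H$ of sets closed (resp.\ open) in $X$. These two tools let us translate each LDP bound between $X$ and $H$.

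For part (1), I would begin by checking that the extension $I'$ is lower semicontinuous on $X$, hence a legitimate rate function. Its level set $\Psi_{I'}(\alpha) = \Psi_I(\alpha)$ is closed in $H$ by hypothesis, and the closedness of $H$ in $X$ promotes it to a closed subset of $X$; this is where the closedness hypothesis is essential. Next, for the upper bound on an arbitrary Borel $\Gamma \subset X$, I would write $\mu_n(\Gamma) = \mu_n(\Gamma \cap H)$ and invoke the $H$-LDP applied to $\Gamma \cap H$, which gives a bound in terms of $-\inf_{\overline{\Gamma\cap H}^H} I$; since $I' = \infty$ off $H$ and $\overline{\Gamma\cap H}^H \subset \overline{\Gamma}^X \cap H$, this infimum dominates $\inf_{\overline{\Gamma}^X} I'$, yielding the upper bound in $X$. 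The lower bound is symmetric: pass from the $X$-interior of $\Gamma$ to its intersection with $H$ (which is open in $H$ and contained in $\Gamma \cap H$) and use $I' = I$ on $H$.

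For part (2), write any $H$-closed set $F$ as $F'\cap H$ for some $X$-closed $F'$. Then $\mu_n(F) = \mu_n(F')$, and the $X$-LDP gives $\limsup \frac{1}{n}\log \mu_n(F) \leq -\inf_{F'} I$. The hypothesis $D_I \subset H$ is precisely what is needed to conclude $\inf_{F'} I = \inf_F I$, since any point of $F' \setminus F \subset X \setminus H$ lies outside $D_I$ and so contributes $\infty$ to the infimum. The lower bound on an $H$-open $G = G' \cap H$ is analogous. For the \emph{in particular} clause, apply the $X$-LDP lower bound to the open set $X \setminus H$: since $\mu_n(X\setminus H) = 0$, the left side is $-\infty$, forcing $\inf_{X\setminus H} I = \infty$, i.e.\ $I \equiv \infty$ off $H$, so $D_I \subset H$ and the main part of (2) applies.

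The argument presents no genuine obstacle; it is essentially topological bookkeeping of infima, and the two hypotheses ($H$ closed in part (1); $D_I \subset H$ in part (2)) enter exactly where they are needed---the former to keep $I'$ lower semicontinuous as a function on $X$, and the latter to prevent the $X$-infima over $F'$ from undercutting the $H$-infima over $F = F'\cap H$. The only subtlety worth flagging is the directional asymmetry in the topological identifications (e.g.\ $\overline{\Gamma\cap H}^H \subset \overline{\Gamma}^X \cap H$ with possible strict inclusion), but in every case the inclusion points in the direction that preserves the inequality required by the corresponding LDP bound.
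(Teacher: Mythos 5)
Your proposal is correct: the bookkeeping with $\mu_n(\Gamma)=\mu_n(\Gamma\cap H)$, the subspace-topology identifications of closures and interiors, and the use of the $X$-lower bound on the open set $X\setminus H$ to get $D_I\subset H$ when $H$ is closed are exactly the standard argument. The paper itself offers no proof of this statement---it is quoted directly from \cite[Lemma 4.1.5]{DZ09}---and your reasoning reproduces the proof of that cited result, with the two hypotheses entering precisely where you say they do.
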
 

\begin{lemm} \cite[Corollary 4.2.6]{DZ09} \label{thm-Dembo-cor4.2.6}
Suppose $(\mu_{n})$ is an exponentially tight family of probability measures on  $(X,\tau_1)$ and the topology $\tau_2$ on $X$ is coarser than $\tau_1$ ($\tau_2 \subset\tau_1$). If $(\mu_{n})$ satisfies an LDP on $(X,\tau_2)$, then $(\mu_{n})$ satisfies the same LDP on $(X,\tau_1)$.
\end{lemm}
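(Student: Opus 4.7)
The plan is to verify the LDP upper and lower bounds on $(X, \tau_1)$ directly from the LDP on $(X, \tau_2)$, with the exponential tightness of $(\mu_n)$ in $\tau_1$ acting as the bridge between the two topologies. The key geometric fact underpinning both bounds is that on every $\tau_1$-compact subset $K$ of $X$ the induced topologies $\tau_1|_K$ and $\tau_2|_K$ coincide: the identity map $(K, \tau_1|_K) \to (K, \tau_2|_K)$ is a continuous bijection (because $\tau_2 \subseteq \tau_1$) from a compact space onto a Hausdorff space, and such a map is automatically a homeomorphism. I take both $\tau_1$ and $\tau_2$ to be Hausdorff on $X$, as is standard in this context. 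Lower semicontinuity of $I$ with respect to $\tau_1$ is immediate from $\tau_2 \subseteq \tau_1$, and once the LDP is in hand Proposition \ref{thm-Dembo-lem1.2.18} upgrades $I$ to a good rate function on $(X, \tau_1)$.

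For the upper bound, fix a $\tau_1$-closed $F \subseteq X$ and any $\alpha$ with $\alpha < \inf_{x \in F} I(x)$. Exponential tightness produces a $\tau_1$-compact $K_\alpha$ with $\limsup_n \frac{1}{n}\log \mu_n(X \setminus K_\alpha) \leq -\alpha$. The set $F \cap K_\alpha$ is $\tau_1$-closed inside the $\tau_1$-compact $K_\alpha$, hence $\tau_1$-compact, hence $\tau_2$-compact, and since $\tau_2$ is Hausdorff also $\tau_2$-closed. The $\tau_2$-LDP upper bound then gives $\limsup_n \frac{1}{n}\log \mu_n(F \cap K_\alpha) \leq -\inf_{F \cap K_\alpha} I \leq -\inf_F I < -\alpha$. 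Combining via $\mu_n(F) \leq \mu_n(F \cap K_\alpha) + \mu_n(X \setminus K_\alpha)$ and then letting $\alpha \uparrow \inf_F I$ yields the $\tau_1$-upper bound.

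For the lower bound, fix a $\tau_1$-open $G$, a point $x \in G$ with $I(x) < \infty$, and some $M > I(x)$. Exponential tightness yields a $\tau_1$-compact $K_M$ with $\limsup_n \frac{1}{n}\log \mu_n(X \setminus K_M) \leq -M$; since $K_M \cup \{x\}$ is still $\tau_1$-compact I may assume $x \in K_M$. The topology-agreement fact, applied to $K_M$, produces a $\tau_2$-open $U \subseteq X$ with $U \cap K_M = G \cap K_M$, and in particular $x \in U$. The $\tau_2$-LDP lower bound gives $\liminf_n \frac{1}{n}\log \mu_n(U) \geq -I(x)$. Since
\begin{equation*}
\mu_n(G) \;\geq\; \mu_n(G \cap K_M) \;=\; \mu_n(U \cap K_M) \;\geq\; \mu_n(U) - \mu_n(X \setminus K_M),
\end{equation*}
and the choice $M > I(x)$ forces $\mu_n(X \setminus K_M)$ to be exponentially negligible compared to $\mu_n(U)$, I conclude $\liminf_n \frac{1}{n}\log \mu_n(G) \geq -I(x)$; as this holds for every $x \in G$ with $I(x) < \infty$, the $\tau_1$-lower bound $\liminf_n \frac{1}{n}\log \mu_n(G) \geq -\inf_{x \in G} I(x)$ follows.

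The main obstacle is the lower bound, specifically the step of manufacturing a $\tau_2$-open set $U$ out of the merely $\tau_1$-open set $G$ in such a way that it can be fed into the $\tau_2$-LDP. This is exactly where the topology-agreement fact on $\tau_1$-compact subsets is indispensable, and it is the reason that exponential tightness with respect to the finer topology $\tau_1$ (rather than merely with respect to $\tau_2$) is the decisive hypothesis of the lemma.
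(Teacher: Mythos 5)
Your proof is correct, and it is essentially the argument behind the result the paper only cites: it reproduces the proof of the inverse contraction principle \cite[Theorem 4.2.4, Corollary 4.2.6]{DZ09} specialized to the identity map, with the closed-set upper bound obtained by intersecting with the compact sets from exponential tightness and the lower bound obtained from the coincidence of $\tau_1$ and $\tau_2$ on $\tau_1$-compact sets. The standing Hausdorff assumption you invoke matches the paper's setting, so no gap remains.
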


Then, we illustrate that when two probability measures are sufficiently "close" (exponentially equivalent); they satisfy the same LDP.
\begin{defn}
Let $(X, d)$ be a metric space. The probability measures $(\mu_{n} )$ and $( \tilde{\mu}_{n} )$ on $X$ are called \textbf{exponentially equivalent} if there exist probability spaces $\{ (\Omega, \mathcal{B}_{n}, P_{n}) \}$ and two families of $X$-valued random variables $({Z_{n}} ), \, ( \tilde{Z}_{n})$ with joint laws $( P_{n} )$ and marginals $( \mu_{n} )$ and $ (\tilde{\mu}_{n})$, respectively, such that the following condition is satisfied:

For each $\delta>0$, the set $\{\omega:(Z_{n},\tilde{Z}_{n}) \in \Gamma_{\delta} \} \in \mathcal{B}_{n}$ and 
       \begin{equation*}
	   	\limsup_{n \to \infty} \frac{1}{n} \log P_{n}(\Gamma_{\delta})=-\infty,
	   \end{equation*}
where $\Gamma_{\delta}:= \{ (x,\tilde{x}): d(x, \tilde{x} )>\delta \} \subset X \times X$.
\end{defn}

\begin{prop} \cite[Theorem 4.2.13]{DZ09} \label{thm-Dembo-thm4.2.13}
	If an LDP with a good rate function $I$ holds for the probability measures  $(\mu_{n})$, which are exponentially equivalent to $(\tilde{\mu}_{n})$, then the same LDP holds for $(\tilde{\mu}_{n})$.
\end{prop}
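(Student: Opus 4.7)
The plan is to transfer the LDP from $(\mu_n)$ to $(\tilde\mu_n)$ by comparing any set with its $\delta$-enlargement under the given coupling, exploiting the super-exponential control $\limsup_{n\to\infty}\frac{1}{n}\log P_n(\Gamma_\delta) = -\infty$. For the upper bound, I fix a closed set $F \subset X$ and $\delta>0$, and let $F^\delta = \{y : d(y, F) \leq \delta\}$. Using the coupling,
\begin{equation*}
\tilde\mu_n(F) = P_n(\tilde Z_n \in F) \leq P_n(Z_n \in F^\delta) + P_n(\Gamma_\delta) = \mu_n(F^\delta) + P_n(\Gamma_\delta).
\end{equation*}
Applying $\limsup_{n\to\infty}\frac{1}{n}\log$ together with the standard max-rule for sums and the LDP upper bound for $(\mu_n)$ on the closed set $F^\delta$, I obtain
\begin{equation*}
\limsup_{n\to\infty} \frac{1}{n} \log \tilde\mu_n(F) \leq \max\Bigl(-\inf_{y \in F^\delta} I(y),\, -\infty\Bigr) = -\inf_{y \in F^\delta} I(y).
\end{equation*}
Letting $\delta \downarrow 0$ and invoking goodness of $I$ then gives $\inf_{F^\delta} I \to \inf_F I$, which completes the upper bound.

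For the lower bound, let $G \subset X$ be open; the only nontrivial case is $G \cap D_I \neq \emptyset$. Pick $x \in G$ with $I(x) < \infty$ and $\delta > 0$ so small that the open ball $B(x, 2\delta) \subset G$. The set inclusion
\begin{equation*}
\{Z_n \in B(x, \delta)\} \cap \{(Z_n, \tilde Z_n) \notin \Gamma_\delta\} \subset \{\tilde Z_n \in B(x, 2\delta)\} \subset \{\tilde Z_n \in G\}
\end{equation*}
yields $\tilde\mu_n(G) \geq \mu_n(B(x, \delta)) - P_n(\Gamma_\delta)$. Since $P_n(\Gamma_\delta)$ decays super-exponentially while the LDP lower bound for $(\mu_n)$ gives $\liminf_{n\to\infty}\frac{1}{n}\log \mu_n(B(x,\delta)) \geq -I(x) > -\infty$, the error term is eventually dominated, so $\tilde\mu_n(G) \geq \tfrac{1}{2}\mu_n(B(x,\delta))$ for all sufficiently large $n$. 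Hence $\liminf_{n\to\infty} \frac{1}{n} \log \tilde\mu_n(G) \geq -I(x)$, and optimizing over $x \in G \cap D_I$ closes the bound.

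The main obstacle is the closing-up step at the end of the upper bound: the identity $\lim_{\delta \to 0} \inf_{F^\delta} I = \inf_F I$ is precisely where the goodness hypothesis enters. If this identity failed, there would exist a subsequence $\delta_k \downarrow 0$ and points $y_k \in F^{\delta_k}$ with $I(y_k) \leq \alpha$ for some finite $\alpha < \inf_F I$; compactness of the level set $\Psi_I(\alpha)$ would then yield a subsequential limit $y_\infty$, which must lie in $F$ because $F$ is closed and $d(y_k, F) \leq \delta_k \to 0$, and lower semicontinuity of $I$ would give $I(y_\infty) \leq \alpha < \inf_F I$, a contradiction. The lower bound, by contrast, uses only the local LDP lower bound on balls together with the super-exponential smallness of $\Gamma_\delta$, with no further structural input. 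Since $I$ is already assumed to be a good rate function, no additional verification is needed for $(\tilde\mu_n)$.
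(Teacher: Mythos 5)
Your proof is correct and is essentially the standard argument for this result: the paper itself does not prove the proposition but cites it from \cite[Theorem 4.2.13]{DZ09}, and your coupling comparison with the closed $\delta$-blowup $F^\delta$ (plus the goodness argument showing $\inf_{F^\delta} I \to \inf_F I$) and the ball-shrinking lower bound is exactly the textbook proof. No gaps; the superexponential decay of $P_n(\Gamma_\delta)$ is used correctly in both bounds.
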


Finally, we introduce the large deviation for projective limits which is the key tool to prove existence of the sample path LDP.

Let $(\mathcal{J}, \leq )$ be a partially ordered right-filtering set, i.e., for every $K,J \in \mathcal{J}$, there exist a $L \in\mathcal{J}$ such that $K \leq L$, $J \leq L$.  A projective system  $(Y_J, P_{KJ})_{K \leq J \in \mathcal{J}}$ consists of Hausdorff topological space $\{Y_J \}_{J \in \mathcal{J}}$ and continuous maps $\{P_{KJ}: Y_J \to Y_K \}_{K \leq J}$ such that $P_{KJ} = P_{KL} \circ P_{LJ}$ for $K \leq L \leq J$ and $P_{JJ} = \mathrm{id}_{Y_J}$.  The projective limit of this system is defined as 
\begin{equation*}
  \varprojlim Y_J:= \left\{ (y_J) \in \prod_{J \in \mathcal{J}} Y_J \ \bigg| \ \forall L \leq J,\ P_{LJ}(Y_J) = Y_L \right\},
\end{equation*}
with the topology induced by the product topology. The canonical projection $P_J: \varprojlim Y_J \to Y_J$ are continuous and the collection $\{ P_J^{-1}(U) \mid U \subseteq Y_J \text{ is open} \}$ is a base for the topology on $\varprojlim Y_J$.
Finally, we give the large deviation for projection limits.

\begin{prop}[\rm{Dawson-G{\"a}rtner}] \cite[Lemma 4.7]{DG87}  \cite[Theorem4.6.1]{DZ09}\label{thm-Dembo-thm4.6.1}
Let $(\mu_{n})$ be a sequence of probability measures on  $\varprojlim Y_J$ such that for each $J \in \mathcal{J}$, Borel probability measures $\mu_{n}\circ P_J^{-1}$ on $Y_J$ satisfy the LDP with the good rate function $I_J$. Then $(\mu_{n})$ satisfies the LDP with the good rate function
\begin{equation*}
  I(x)=\sup_{J\in \mathcal{J}} \{I_J(P_J(x))\} ,\ \ x \in \varprojlim Y_J
\end{equation*}
\end{prop}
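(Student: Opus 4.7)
The plan is to verify three things for the sequence $(\mu_n)$ on $\varprojlim Y_J$: that $I$ is a good rate function, the LDP lower bound, and the LDP upper bound.

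For goodness of $I$, each composition $I_J \circ P_J$ is lower semicontinuous, being $P_J$ (continuous) followed by $I_J$ (l.s.c.); hence $I = \sup_J I_J \circ P_J$ is l.s.c.\ as a supremum of l.s.c.\ functions. For compactness of the level sets, I would write
\[
\Psi_I(\alpha) \;=\; \bigcap_{J \in \mathcal{J}} P_J^{-1}\bigl(\Psi_{I_J}(\alpha)\bigr),
\]
embed $\varprojlim Y_J$ into $\prod_J Y_J$ with the product topology, and observe that $\Psi_I(\alpha)$ sits inside $\prod_J \Psi_{I_J}(\alpha)$, which is compact by Tychonoff (each $\Psi_{I_J}(\alpha)$ is compact by goodness of $I_J$). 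Since $\Psi_I(\alpha)$ is closed in the product (as the intersection of the compatibility conditions defining $\varprojlim Y_J$ with the preimages $P_J^{-1}(\Psi_{I_J}(\alpha))$), it is compact, so $I$ is good.

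For the LDP lower bound I would exploit that cylinder sets $P_J^{-1}(U)$ with $U$ open in $Y_J$ form a base for the projective-limit topology. Given open $G \subset \varprojlim Y_J$ and $x \in G$, choose such a basic cylinder with $x \in P_J^{-1}(U) \subset G$. The LDP lower bound for $\mu_n \circ P_J^{-1}$ on $Y_J$ then gives
\[
\liminf_{n \to \infty} \tfrac{1}{n} \log \mu_n(G) \;\geq\; \liminf_{n \to \infty} \tfrac{1}{n} \log \mu_n \circ P_J^{-1}(U) \;\geq\; -I_J(P_J(x)) \;\geq\; -I(x),
\]
using $I(x) \geq I_J(P_J(x))$. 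Taking supremum over $x \in G$ produces $\liminf_n \tfrac{1}{n}\log \mu_n(G) \geq -\inf_G I$, as required.

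The LDP upper bound is the main technical obstacle. I would first establish the weak LDP upper bound on compact sets and then extend. For a compact $K$ and $\alpha < \inf_K I$, each $x \in K$ admits some $J(x)$ with $I_{J(x)}(P_{J(x)}(x)) > \alpha$; fix $\beta_x \in (\alpha, I_{J(x)}(P_{J(x)}(x)))$. Using Hausdorffness of $Y_{J(x)}$ together with compactness of $\Psi_{I_{J(x)}}(\beta_x)$, separate $P_{J(x)}(x)$ from that level set by disjoint open sets, yielding an open $U_x \ni P_{J(x)}(x)$ with $\inf_{\overline{U_x}} I_{J(x)} \geq \beta_x > \alpha$. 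The basic opens $V_x := P_{J(x)}^{-1}(U_x)$ cover $K$; extracting a finite subcover and applying a union bound with the LDP upper bound on each $Y_{J_i}$ gives $\limsup_n \tfrac{1}{n} \log \mu_n(K) \leq -\alpha$. To upgrade this weak upper bound to the full LDP via Proposition \ref{thm-Dembo-lem1.2.18}, I would then need to verify exponential tightness of $(\mu_n)$ on $\varprojlim Y_J$. This is the truly delicate part: one must leverage the compactness of the level sets $\Psi_I(\alpha)$ together with the cylinder-set topology base and the finite-dimensional LDP upper bounds to show that the mass outside some compact neighborhood of $\Psi_I(\alpha)$ decays exponentially fast. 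Once exponential tightness is secured, Proposition \ref{thm-Dembo-lem1.2.18} combined with the weak LDP yields the claimed full LDP with good rate function $I$.
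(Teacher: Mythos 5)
The paper does not prove this proposition at all---it is quoted from Dembo--Zeitouni (Theorem 4.6.1) and Dawson--G\"artner---so the comparison can only be with the standard proof. Your treatment of the goodness of $I$ (closedness of $\varprojlim Y_J$ in the Hausdorff product plus Tychonoff) and of the lower bound (basic cylinder sets $P_J^{-1}(U)$ and the finite-dimensional lower bound) is correct, and your weak upper bound on compact sets via point/compact separation and a finite subcover is also fine.

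The genuine gap is the last step: you propose to upgrade the weak LDP to the full LDP through Proposition \ref{thm-Dembo-lem1.2.18}, which requires exponential tightness of $(\mu_n)$ on $\varprojlim Y_J$, but exponential tightness is neither a hypothesis of the theorem nor derivable from them. The index set $\mathcal{J}$ is an arbitrary (here uncountable) directed set and the $Y_J$ are merely Hausdorff; even granting a compact $K_J^{\alpha}\subset Y_J$ with exponentially small complement for each $J$ (itself not automatic, since ``LDP with good rate function $\Rightarrow$ exponential tightness'' needs Polish or locally compact structure), a set of the form $\varprojlim Y_J\cap\prod_J K_J^{\alpha_J}$ cannot be controlled by a union bound over infinitely many $J$, so no exponential estimate on its complement comes out. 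Indeed the Dawson--G\"artner theorem is routinely applied in nonmetrizable projective limits where $(\mu_n)$ is not exponentially tight, so any proof routed through exponential tightness cannot work in the stated generality. The standard argument instead proves the upper bound directly for an arbitrary closed set $A$: one first notes the consistency $I_K(P_{KJ}(y))\le I_J(y)$ (by the contraction principle and uniqueness of rate functions, $I_K(z)=\inf\{I_J(y):P_{KJ}(y)=z\}$), so that for $\alpha<\inf_A I$ the sets $\Psi_{I_J}(\alpha)\cap\overline{P_J(A)}$ form a projective system of compact sets; if all were nonempty, their projective limit would contain a point $x$ with $I(x)\le\alpha$ and, since the cylinders form a base and $A$ is closed, $x\in A$, a contradiction. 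Hence some $J$ has $\inf_{\overline{P_J(A)}}I_J\ge\alpha$, and $\mu_n(A)\le\mu_n\circ P_J^{-1}\bigl(\overline{P_J(A)}\bigr)$ together with the finite-dimensional upper bound gives $\limsup_n\frac{1}{n}\log\mu_n(A)\le-\alpha$. Replacing your exponential-tightness step by this argument closes the gap.
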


\section{Sample Path Large Deviation for Random Walks}
In this section, we first show that the finite-dimensional projection at multiple time points of the sample path of the RW satisfies the LDP with a convex rate function. Our approach builds on Corso’s one-dimensional argument \cite{Cor21}, while handling multi-time projections by introducing stopping times tailored to the tree structure.
 We then apply the projection system theories to show the LDP in the projective limit space. 
 Finally, we establish the existence of the sample path LDP and provide an expression for the rate function.

\subsection{Path Concatenation Method} \label{sec-pinjie}
Let $S^n =\{ \bm{g} = (s_1, \ldots, s_n): s_i \in S, 1 \leq i \leq n \}$ be the set of $n$-step paths (using edges to represent paths). For $\bm{g} = (s_1, \ldots, s_n) \in S^n$, define $g(i) = s_1 \cdots s_i \in G$ as the reduced word corresponding to the path segment $\bm{g}(i) = (s_1, \ldots, s_i)$ for $1\leq i\leq n$. More generally, for $1\leq i \leq j \leq n$, we write $\bm{g}(i, j) = (s_i, \ldots, s_j)$ to denote the path from step $i$ to step $j$. 
The concatenation $\bm{g} \oplus \bm{g}'$ is defined as the direct joining of paths $\bm{g} = (s_1, \ldots, s_n)$ and $\bm{g}' = (s_1', \ldots, s_{n'}')$ without reduction, in particular,
\begin{equation*}
\bm{g} \oplus \bm{g}' = (s_1, \ldots, s_n, s_1', \ldots, s_{n'}').
\end{equation*}

Let $J=(t_1, t_2, \ldots, t_j)$ be an ordered subset of $(0,1]$ satisfying $0<t_1<t_2<\cdots <t_j \leq 1$. For $x= (x_1, x_2\ldots, x_j)\in (\mathbb{R}_{\geq 0})^j$ and $\rho > 0$,  we define the neighborhood of $x$ with radius $\rho$ as
\begin{equation*}
B(x, \rho) := \left\{ y=(y_1, y_2\ldots, y_j) \in \mathbb{R}^j : \max_{1\leq i\leq j}  |y_i - x_i|  < \rho \right\}
\end{equation*}
and consider the $n$-step paths confined to the $B(x, \rho)$, in particular, 
\begin{equation*}
\bm{F}_{n, \rho}^{J}(x) := \left\{ \bm{g}= (s_1, \ldots, s_{n}) \in S^{n} : \frac{1}{n} \left( l(g([n t_1])), \cdots, l(g([n t_j])) \right) \in B(x, \rho) \right\}.
\end{equation*}
Furthermore, for any positive integer $k$, we let $k B(x, \rho)=\{ k y: y \in B(x, \rho) \}$.

Recall that the RW $Y_n = X_1 X_2 \cdots X_n$. Let $\nu_n$ be the joint law of $(X_1, X_2, \ldots, X_{n})$. So it is easy to see that 
\begin{equation*}
	\pr \left( \frac{1}{n} \left( l(Y_{[n t_1]}), l(Y_{[n t_2]}), \ldots , l(Y_{[n t_j]}) \right) \in B(x,\rho) )  \right) = \nu_n \left( \bm{F}_{n, \rho}^{J}(x) \right).
\end{equation*}

We will show in Lemma \ref{lem-pinjie} that for any $\rho' > \rho$ and sufficiently large $n$, there exists a path set $\bm{B} \subset \bm{F}_{n, \rho}^{J}(x)$ with a lower bound on the exponential decay rate of the probability such that any $k$ paths from $\bm{B}$ can be concatenated to a new path with approximate length $k n$ by a specific concatenation method and the time-point projections of this new path fall into $k n B(x, \rho')$. 
For convenience, we assume that $j = 2$, i.e., $J=(t_1, t_2)$ and $x= (x_1, x_2)$. The path concatenation method we have developed can easily be extended to the case $j\geq 3$.

Before we state the path concatenation method, we introduce some notations that will be used next. For $\bm{g} \in \bm{F}_{n, \rho}^{J}(x)$, define 
\begin{align*}
y_1(\bm{g}) = g([n t_1]), \, \ 
y_2(\bm{g}) = g([n t_2]), \, \ 
(y_1 \wedge y_2)(\bm{g}) = y_1(\bm{g}) \wedge y_2(\bm{g}),
\end{align*}
\begin{align*}
L_1(\bm{g}) = l(g([n t_1])), \,  \ 
L_2(\bm{g}) = l(g([n t_2])), \,  \ 
L_0(\bm{g}) = l(y_1(\bm{g}) \wedge y_2(\bm{g})),
\end{align*}
\begin{align*}
m_1(\bm{g}) &= \max \left\{ i \leq [n t_1] : g(i) = (y_1 \wedge y_2)(\bm{g}) \right\}, \\
m_2(\bm{g}) &= \min \left\{ i \geq [n t_1] : g(i) = (y_1 \wedge y_2)(\bm{g}) \right\}.
\end{align*}
where $u\wedge v$ represents the most recent common ancestor of $u$ and $v$ in $T_d$. 
It is easy to see that $y_1$ and $y_2$ are the positions of the path at times $t_1$ and $t_2$, respectively, and $y_1 \wedge y_2$ is their most recent common ancestor.  And the quantities $L_0, L_1, L_2$ are their word lengths respectively.
Due to the cycle-free property of the tree, the path must visit the vertices in the order
\begin{equation*}
e \to y_1 \wedge y_2 \to y_1 \to y_1 \wedge y_2 \to y_2,
\end{equation*}
and $m_1, m_2$ record the last time and the first time that the path returns to $y_1 \wedge y_2$ before and after $t_1$, respectively.

Hence we can divide the path into four segments $e \to y_1 \wedge y_2$, $y_1 \wedge y_2 \to y_1$, $y_1 \to y_1 \wedge y_2$ and $ y_1 \wedge y_2 \to y_2$  with reduced words $ y_1 \wedge y_2$, $y_1$ and $y_2$ of lengths $L_0$, $L_1$ and $L_2$ respectively. 
Therefore we can decompose the set $\bm{F}_{n, \rho}^{J}(x)$ and get a path subset with fixed lengths $L_0,L_1,L_2,m_1,m_2$. Moreover, we can further decompose the path subset and get a path set  with a lower  bound of probability such that the paths share the same fixed initial and terminal letters  for each of the  reduced words associating with the segments. Then we can easily concatenate the paths in this path set with less reduction. Now we state the concatenation lemma.

\begin{lemm} \label{lem-pinjie}
Let	$J=(t_1, t_2)$ with $0< t_1 < t_2 \leq 1$,  $x= (x_1, x_2) \in (\mathbb{R}_{\geq 0})^2$ and $\rho'>\rho > 0$. Suppose
\begin{equation} \label{inequ-n_q-1}
n \geq \frac{4(x_1 + x_2 + 4)}{\rho' - \rho}.
\end{equation}
Then there exists a $n$-step path set $\bm{B} \subset \bm{F}_{n, \rho}^{J}(x)$ such that 
\begin{equation*}
	\nu_n (\bm{B}) \geq  \frac{1}{(n+1)^5 d^6}  \nu_n \left(\bm{F}_{n, \rho}^{J} (x) \right),
\end{equation*}
and there exist $a, b, c \in S$ such that for any paths $\bm{g}_1, \ldots, \bm{g}_k \in \bm{B}$, $k \geq 2$, the concatenated path $\bm{h} = \bm{h}(\bm{g}_1, \ldots, \bm{g}_k)$ has step length $\tilde{n}_k := k n + 4(k - 1)$ and satisfies
\begin{equation*}
\frac{1}{\tilde{n}_k} \left( l(h([\tilde{n}_k t_1])), \, l(h([\tilde{n}_k t_2])) \right) \in B(x, \rho'),
\end{equation*}
where $\bm{h} = \bm{h}(\bm{g}_1, \ldots, \bm{g}_k)$ is defined by
\begin{align*}
\bm{h} =\;& \bm{g}_1(m_1) \oplus a \oplus \bm{g}_2(m_1) \oplus a \oplus \cdots \oplus a \oplus \bm{g}_k(m_1) \oplus \\
& \bm{g}_1(m_1+1, [n t_1]) \oplus b \oplus \bm{g}_2(m_1+1, [n t_1]) \oplus b \oplus \cdots \oplus b \oplus \bm{g}_k(m_1+1, [n t_1]) \oplus \\
& \bm{g}_k([n t_1]+1, m_2) \oplus b \oplus \bm{g}_{k-1}([n t_1]+1, m_2) \oplus b \oplus \cdots \oplus b \oplus \bm{g}_1([n t_1]+1, m_2) \oplus \\
& \bm{g}_1(m_2+1, [n t_2]) \oplus c \oplus \bm{g}_2(m_2+1, [n t_2]) \oplus c \oplus  \cdots \oplus c \oplus \bm{g}_k(m_2+1, [n t_2]) \oplus \\
& \bm{g}_1([n t_2]+1, n) \oplus \bm{g}_2([n t_2]+1, n) \oplus \cdots \oplus \bm{g}_k([n t_2]+1, n).
\end{align*}
\end{lemm}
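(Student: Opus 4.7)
The plan is a two-stage pigeonhole refinement of $\bm{F}_{n,\rho}^{J}(x)$ followed by a direct length estimate at the query times $[\tilde{n}_k t_1]$ and $[\tilde{n}_k t_2]$. First I partition $\bm{F}_{n,\rho}^{J}(x)$ according to the five integer statistics $(L_0,L_1,L_2,m_1,m_2)$, each of which takes at most $n+1$ values; the most populous level set carries $\nu_n$-mass at least $\nu_n(\bm{F}_{n,\rho}^{J}(x))/(n+1)^5$. Inside that level set I stratify further by the first and last letters of each of the three reduced words $y_1\wedge y_2$, $(y_1\wedge y_2)^{-1}y_1$, and $(y_1\wedge y_2)^{-1}y_2$; writing these six letters as $\alpha_0,\beta_0,\alpha_1,\beta_1,\alpha_2,\beta_2$, the second pigeonholing costs at most a factor $d^6$ and yields the candidate $\bm{B}$ with the stated lower bound on $\nu_n(\bm{B})$.

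With the letters fixed, I choose $a\notin\{\alpha_0,\beta_0\}$, $b\notin\{\alpha_1,\beta_1\}$, and $c\notin\{\alpha_2,\beta_2\}$; such selections exist because $d\geq 3$. These choices eliminate cancellation at every splice in $\bm{h}$: at the inserted separators the check is immediate, while at the four direct joins between consecutive blocks a subtree-trap argument intervenes. On the interval $(m_1,[nt_1]]$, for example, the original walk never revisits $y_1\wedge y_2$, so its first generator after time $m_1$ must equal $\alpha_1\neq\beta_0$; the three other seams are handled identically. Since nothing cancels anywhere, if $P$ denotes the reduced word of $\bm{h}$ immediately before a translated copy of a sub-path $\bm{g}_i(p,q)$ whose original walk stays in a subtree above its starting vertex of length $L_v$, then the position of $\bm{h}$ at internal step $j$ equals $P\cdot w_j$ with $w_j=g_i(p)^{-1}g_i(p+j)$, and $|P\cdot w_j|=|P|+l(g_i(p+j))-L_v$.

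This identity furnishes a closed length formula at every sub-block boundary; for example, just before the $k$-th sub-path $\bm{g}_k(m_1+1,[nt_1])$ in Block~2, $\bm{h}$ sits at length exactly $L_0+(k-1)L_1+2(k-1)$. Comparing $(kn+4(k-1))t_1$ to the cumulative step counts $km_1+(k-1)$, $k[nt_1]+2(k-1)$, $km_2+3(k-1)$, $k[nt_2]+4(k-1)$ pins down the sub-block that contains $[\tilde{n}_k t_1]$ and its offset $j$; in the generic case $t_1\leq 1/2$ this is $\bm{g}_k(m_1+1,[nt_1])$ and
\begin{equation*}
l(h([\tilde{n}_k t_1]))=(k-1)L_1+2(k-1)+l(g_k(m_1+j)).
\end{equation*}
Using $|L_1-nx_1|<n\rho$ from $\bm{g}_k\in\bm{F}_{n,\rho}^{J}(x)$, the subtree-trap estimate $|l(g_k(m_1+j))-L_1|\leq [nt_1]-m_1-j$ controlled by the explicit offset formula, and the linear-in-$k$ separator contribution, the deviation of $l(h([\tilde{n}_k t_1]))$ from $\tilde{n}_k x_1$ is bounded by $kn\rho$ plus a quantity that grows linearly in $k$ but is independent of $n$. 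The hypothesis~\eqref{inequ-n_q-1} makes $n(\rho'-\rho)\geq 16$, which turns this into $|l(h([\tilde{n}_k t_1]))-\tilde{n}_k x_1|<\tilde{n}_k\rho'$; an analogous case split for $[\tilde{n}_k t_2]$, generically falling inside $\bm{g}_k(m_2+1,[nt_2])$ in Block~4, completes the argument.

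The main obstacle I expect is the case analysis pinning down which sub-block contains each of $[\tilde{n}_k t_1]$ and $[\tilde{n}_k t_2]$: depending on the relative sizes of $t_1,t_2$ and $m_1,m_2$ the query times may fall in any of the five blocks, and each case requires its own instance of the length identity together with a matching subtree-trap estimate compatible with $\rho'-\rho$. Degenerate parameter values ($L_0=0$, $L_1=L_0$, or $L_2=L_0$) trivialise one or more of the letter strata but never enlarge $\bm{B}$, so the factor $d^6$ remains a safe over-count throughout.
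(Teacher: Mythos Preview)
Your two-stage pigeonhole producing $\bm{B}$ and the selection of $a,b,c$ coincide with the paper's proof. The divergence is in the length estimate, where you are working harder than needed and leaving a gap. The paper never locates the sub-block containing $[\tilde n_kt_i]$: it computes $l(h(\cdot))$ only at the checkpoint step $k[nt_1]+2(k-1)$ (the end of Block~2), where the reduced word has length exactly $kL_1+2(k-1)$, and then applies the $1$-Lipschitz bound $|l(h(s))-l(h(s'))|\le|s-s'|$ together with $\bigl|[\tilde n_kt_1]-k[nt_1]-2(k-1)\bigr|=O(k)$. A three-term triangle inequality gives $|l(h([\tilde n_kt_1]))-\tilde n_kx_1|\le kn\rho+4(k-1)(x_1+3)$ in one line, and similarly for $t_2$ with checkpoint $k[nt_2]+4(k-1)$. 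The case analysis you flag as your main obstacle never arises.

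Your plan has two problems on the $t_2$ side. First, the $b$-separators in Block~3 do not prevent cancellation; they are \emph{designed} to cancel. The reversed order $\bm{g}_k,\ldots,\bm{g}_1$ makes Block~3 undo Block~2 letter for letter, so the reduced word at the end of Block~3 is back to length $kL_0+(k-1)$; your claim that the separator checks are ``immediate'' and that ``nothing cancels anywhere'' misreads this part of the construction. Second, your subtree-trap identity requires the original walk to remain above its starting vertex, but the sub-blocks $\bm{g}_i(m_2+1,[nt_2])$ in Block~4 carry no such constraint: after time $m_2$ the walk $\bm{g}_i$ may drift toward the root before reaching $y_2$, so the formula $|P\cdot w_j|=|P|+l(g_i(m_2+j))-L_0$ can fail there, and the ``analogous'' treatment of $[\tilde n_kt_2]$ does not go through as stated. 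Both issues disappear under the paper's Lipschitz-from-a-checkpoint argument, which needs only the length at the \emph{end} of Block~4, namely $kL_2+2(k-1)$, and not any intermediate value.
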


\begin{figure*}[htpt]
\centering
\includegraphics[width=6in]{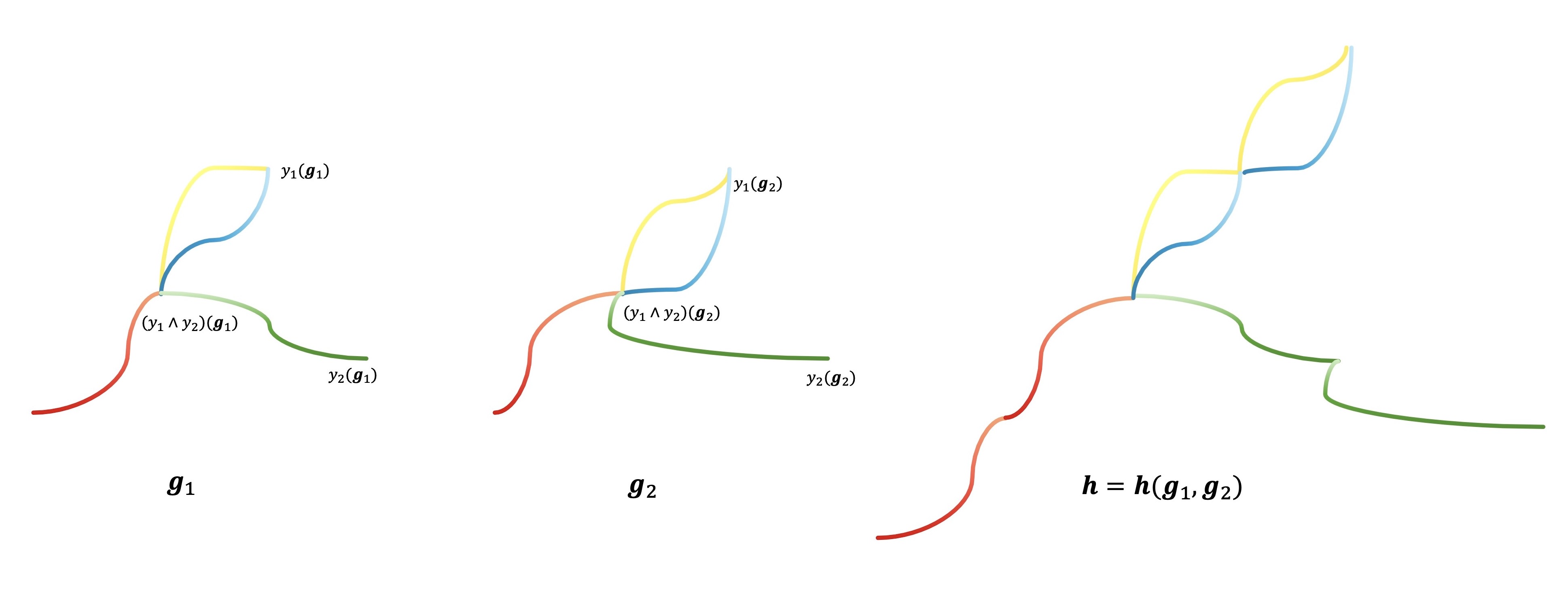}
\caption{Illustration for the concatenation of paths $\bm{g}_1$ and $\bm{g}_2$. }
\end{figure*}

\begin{proof}[Proof of Lemma \ref{lem-pinjie}]
 We first find the $n$-step path set $\bm{B}$ and then show that the paths in $\bm{B}$ satisfy the concatenation result.
 
Due to the values of $L_0(\bm{g}), L_1(\bm{g}), L_2(\bm{g}), m_1(\bm{g}), m_2(\bm{g})$,	the set $\bm{F}_{n, \rho}^{J}(x)$ can be decomposed into finite disjoint subsets, in particular,
\begin{equation*}
\bm{F}_{n, \rho}^{J} (x) = \bigcup_{L_0,L_1,L_2,m_1,m_2} \bm{A}(L_0,L_1,L_2,m_1,m_2),
\end{equation*} 
for $0 \leq  L_0, \, L_1, \, L_2, \, m_1, \, m_2 \leq n$, and 
\begin{eqnarray*}
\lefteqn{\bm{A}(L_0,L_1,L_2,m_1,m_2)} \\
&=& \left\{ \bm{g} \in \bm{F}_{n, \rho}(x) : L_0(\bm{g})=L_0,\, L_1(\bm{g})=L_1,\, L_2(\bm{g})=L_2, \, m_1(\bm{g})=m_1,\, m_2(\bm{g})=m_2 \right\}.
\end{eqnarray*}
Hence by the disjointness of the sets $\bm{A}(L_0,L_1,L_2,m_1,m_2)$, there exist integers $L_0,L_1,L_2,m_1,m_2$ such that 
\begin{equation*}
	\nu_{n} \left(\bm{A}(L_0,L_1,L_2,m_1,m_2) \right) \geq \frac{1}{(n+1)^5} \nu_n \left(\bm{F}_{n, \rho}^{J} (x) \right).
\end{equation*}
Similarly, for the fixed path set  $\bm{A}(L_0,L_1,L_2,m_1,m_2)$, we can further classify paths according to the first and last letters of the reduced words $y_1 \wedge y_2$, $(y_1 \wedge y_2)^{-1} y_1$ and $(y_1 \wedge y_2)^{-1} y_2$. Therefore there exists a subset 
\begin{equation*}
	\bm{B} \subset \bm{A}(L_0, L_1, L_2, m_1, m_2)
\end{equation*}
such that all paths in $\bm{B}$ share the same fixed initial and terminal letters (in $S$) for each of the three reduced segments above. Hence 
\begin{equation*}
\nu_n (\bm{B} ) \geq \frac{1}{d^{6}}  \nu_n \left(\bm{A}(L_0, L_1, L_2, m_1, m_2) \right) \geq \frac{1}{(n+1)^5 d^{6}} \nu_n \left(\bm{F}_{n, \rho}^{J} (x) \right)
\end{equation*}
and there exist  letters $a, b, c \in S$ such that for each path $\bm{g} \in\bm{B}$,
\begin{itemize}
  \item $a$ differs from both the first and last letters of $(y_1 \wedge y_2)(\bm{g})$,
  \item $b$ differs from both the first and last letters of $((y_1 \wedge y_2)(\bm{g}))^{-1} y_1(\bm{g})$,
  \item $c$ differs from both the first and last letters of $((y_1 \wedge y_2)(\bm{g}))^{-1} y_2(\bm{g})$.
\end{itemize}
When $L_0 =L_1$, we have that $y_1 \wedge y_2= y_1$ which implies  $m_1=[n t_1]=m_2$. Hence the corresponding path segments $\bm{g}(m_1+1, [n t_1])$ and  $\bm{g}([nt_1]+1, m_2 )$ are empty.  Therefore we view these as the empty path and pick up $b \in S$ arbitrarily.
When $L_0 =L_2$, we have that $y_1 \wedge y_2= y_2$. If $m_2=[nt_2]$, we view $\bm{g}(m_2+1, [nt_2])$ as the empty path and take $c$ which is not equal to the last letter of $y_1 \wedge y_2$ and the first of $(y_1 \wedge y_2)^{-1} y_1$. Otherwise, we have that $m_2<[nt_2]$ and the path $\bm{g}(m_2+1, [nt_2])$ formulate a loop. Therefore we fix the first and last letters of the path $\bm{g}(m_2+1, [nt_2])$ as the last letter of $y_1 \wedge y_2$ and also take $c$ which is not equal to the last letter of $y_1 \wedge y_2$ and the first of $(y_1 \wedge y_2)^{-1} y_1$.

Now we have found the path set $\bm{B}$ and then we show that the paths in $\bm{B}$ satisfy the concatenation result. For $\bm{g}_1, \ldots, \bm{g}_k \in \bm{B}$, it is easy to see the path $\bm{h} = \bm{h}(\bm{g}_1, \ldots, \bm{g}_k)$ has step length  $\tilde{n}_k = k n + 4(k - 1)$ and
\begin{align*}
h\left(k [n t_1] + 2(k - 1)\right) 
&=  ( y_1 \wedge y_2)( \bm{g_1})\cdot a \cdot  ( y_1 \wedge y_2)( \bm{g_2}) \cdot a \cdots a \cdot  ( y_1 \wedge y_2)( \bm{g_k}) \cdot \\
&\quad (y_1 \wedge y_2)^{-1} y_1(\bm{g}_1) \cdot b \cdot (y_1 \wedge y_2)^{-1} y_1(\bm{g}_2) \cdot b\cdots b \cdot (y_1 \wedge y_2)^{-1} y_1(\bm{g}_k).
\end{align*}
Hence the word length 
\begin{equation*}
	l\left(h\left(k [n_q t_1] + 2(k - 1)\right)\right) =2(k-1) + \sum_{i = 1}^{k} l(y_1(\bm{g}_i)) = 2(k - 1)+k L_1.
\end{equation*}
Since $g_i \in \bm{A}(L_0, L_1, L_2, m_1, m_2) \subset \bm{F}_{n, \rho}^{J} (x) $ for all $1\leq  i \leq k$, we have $|L_1 - n x_1| < n \rho$. Hence
\begin{eqnarray*}
\left| l(h([\tilde{n}_k t_1])) - \tilde{n}_k x_1 \right|  
& \leq & \left| l\left(h( [\tilde{n}_k t_1])\right) -l\left( h(k[n t_1]+ 2(k-1)) \right) \right|  \\
&& + \left|l\left( h(k[n t_1]+ 2(k-1)) \right) - kn x_1 \right| + \left|kn x_1 - \tilde{n}_k x_1 \right| 	 \\
&< & \left| [\tilde{n}_k t_1] - k[n t_1] - 2(k - 1) \right| + k n \rho + 2(k - 1) + 4(k - 1) x_1 \\
&\leq & \tilde{n}_k t_1 - k n t_1 + k + 1 + 2(k - 1) + k n \rho + 2(k - 1) + 4(k - 1) x_1 \\
&\leq & k n \rho + 4(k - 1)(x_1 + 3).
\end{eqnarray*}

Similarly, for $\tilde{n}_k t_2$, we have that 
\begin{equation*}
\left| l(h([\tilde{n}_k t_2])) - \tilde{n}_k x_2 \right| \leq   k n \rho + 4(k - 1)(x_2 + 4)
\end{equation*}
Therefore, by assumption \eqref{inequ-n_q-1}, 
\begin{equation*}
\left| l(h([\tilde{n}_k t_1])) - \tilde{n}_k x_1 \right| \leq  \tilde{n}_k \rho'   \quad \text{ and } \quad 
\left| l(h([\tilde{n}_k t_2])) - \tilde{n}_k x_2 \right| \leq  \tilde{n}_k \rho', 
\end{equation*}
which implies 
\begin{equation*}
\frac{1}{\tilde{n}_k} \left( l(h([\tilde{n}_k t_1])), \, l(h([\tilde{n}_k t_2])) \right) \in B(x, \rho').
\end{equation*}
\end{proof}

\subsection{Large Deviations for Finite-Dimensional Projections} \label{sec-finite}
Let $\mathcal{J}$ be the collection of all ordered finite subsets of $(0,1]$, formally,
\begin{equation*}
\mathcal{J} = \left\{ (t_1, t_2,\ldots, t_j)\in (0,1]^j: j \in \mathbb{N}, t_1<t_2<\cdots <t_j  \right\},
\end{equation*}
and $p_J$ the finite-dimensional projection for $J=\{ 0<t_1<t_2<\cdots<t_{j}\leq1 \} \in \mathcal{J}$, i.e., for any function $f:[0,1] \to \mathbb{R}$ , $p_J(f) = (f(t_1), f(t_2), \ldots, f(t_{j})) \in \mathbb{R}^j$.

Recall that 
\begin{equation*}
  Z_n(t)=\frac{1}{n} l(Y_{[nt]}), \ 0\leq t \leq 1,
\end{equation*}
and $\mu_n$ is the law of $Z_n(\cdot)$ in $L_{\infty}([0,1])$. Define $\mu_n^J :=\mu_n \circ p_J^{-1}$ and it is easy to see that $\mu_n^{J}$ is the law of the random vector 
 \begin{equation*}
 	(Z_n(t_1),Z_n(t_2),\ldots, Z_n(t_{j})).
 \end{equation*}
Note that $\nu_n$ is the joint law of $(X_1, X_2, \ldots , X_n)$. Hence
\begin{equation}\label{equ-mu-nu}
\mu_n^{J} (B(x,\rho)) =	\pr \left( \frac{1}{n} \left( l(Y_{[n t_1]}), l(Y_{[n t_2]}), \ldots , l(Y_{[n t_j]}) \right) \in B(x,\rho) )  \right) = \nu_n \left( \bm{F}_{n, \rho}^{J}(x) \right).
\end{equation}

The following proposition establishes the LDP for the multi-time projections of the sample path function $Z_n(\cdot)$, specifically for the probability measures sequence $(\mu_n^J)$.

\begin{prop} \label{prop-mu_n^J-ldp}
The sequence $( \mu_n^J )$ satisfies the LDP in $\mathbb{R}^j$ with the good rate function
\begin{equation*}
I_J(x) = \sup_{V open: x\in V} \left( -\limsup_{n \to \infty} \frac{1}{n} \log \mu_n^J(V) \right)=\sup_{V open: x\in V} \left( -\liminf_{n \to \infty} \frac{1}{n} \log \mu_n^J(V) \right).
\end{equation*}
\end{prop}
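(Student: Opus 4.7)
My approach is the standard two-step method: establish a weak LDP via Proposition \ref{thm-Dembo-thm4.1.11}, then promote it to a full LDP using exponential tightness via Proposition \ref{thm-Dembo-lem1.2.18}. Exponential tightness is immediate: since $l(Y_k) \leq k$, one has $Z_n(t) \in [0,1]$ always, so every $\mu_n^J$ is supported on the compact cube $[0,1]^j$, and a constant compact exhaustion suffices.

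Write $I_J^{\sup}(x) := \sup_{V \ni x \text{ open}}(-\limsup_n n^{-1}\log\mu_n^J(V))$ and $I_J^{\inf}(x)$ analogously with $\liminf$. Always $I_J^{\sup}(x) \leq I_J^{\inf}(x)$, and Proposition \ref{thm-Dembo-thm4.1.11} asks for the reverse. This is where the path concatenation Lemma \ref{lem-pinjie} enters; its statement covers only $j=2$, but I would first verify that the same decomposition and letter-fixing strategy extend to a $(2j+1)$-parameter scheme tracking lengths and first/last return times at every time point in $J$, yielding an analogue for general $j$ with a polynomial-in-$n$ correction. With the generalized lemma in hand, fix $x$ and open $V \ni x$, pick $\rho' > \rho > 0$ with $B(x,2\rho') \subset V$, and use the definition of $I_J^{\sup}$ to produce, for any $\eta > 0$, a subsequence $n_m \to \infty$ with $\mu_{n_m}^J(B(x,\rho)) \geq e^{-n_m(I_J^{\sup}(x)+\eta)}$. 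Once $n_m$ satisfies \eqref{inequ-n_q-1}, the lemma produces $\bm{B}_m \subset \bm{F}_{n_m,\rho}^J(x)$ with $\nu_{n_m}(\bm{B}_m)$ bounded below by $\mu_{n_m}^J(B(x,\rho))$ up to a polynomial factor. The tuple-to-concatenation map is injective, since the block boundaries $m_1, [n_m t_1], m_2, [n_m t_2], \ldots, n_m$ are fixed across $\bm{B}_m$ and therefore recognizable in $\bm{h}$; summing over $k$-tuples and accounting for the $4(k-1)$ inserted letters yields
\begin{equation*}
\mu_{\tilde n_k}^J(B(x,\rho')) \;\geq\; p_{\min}^{4(k-1)} \bigl(\nu_{n_m}(\bm{B}_m)\bigr)^k,\qquad \tilde n_k := kn_m + 4(k-1),
\end{equation*}
where $p_{\min} := \min_i p_i > 0$. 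Taking $\log$, dividing by $\tilde n_k$ and letting $k \to \infty$ gives $\liminf_k \tilde n_k^{-1}\log\mu_{\tilde n_k}^J(B(x,\rho')) \geq -\tfrac{n_m}{n_m+4}(I_J^{\sup}(x)+\eta) + O(\log n_m / n_m)$, which exceeds $-I_J^{\sup}(x) - 2\eta$ once $n_m$ is large.

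To upgrade this subsequential bound to $\liminf_N N^{-1}\log\mu_N^J(V)$, I fill the gaps: for $N$ large, pick $k$ with $\tilde n_k \leq N < \tilde n_{k+1}$, so $r := N-\tilde n_k < n_m + 4$. Appending any fixed $r$-step extension to a valid concatenation has probability cost at least $p_{\min}^r$, and the nearest-neighbor inequality $|l(Y_{[Nt_i]}) - l(Y_{[\tilde n_k t_i]})| \leq |[Nt_i] - [\tilde n_k t_i]| \leq r+1$ ensures that each coordinate of $Z_N$ lies within $O(r/N) \to 0$ of the concatenation's $J$-projection. Hence the extended path lies in $\bm{F}_{N,2\rho'}^J(x) \subset \{Z_N \in V\}$ once $N$ is large, so $\liminf_N N^{-1}\log\mu_N^J(V) \geq -I_J^{\sup}(x) - 2\eta$. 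Since $\eta > 0$ and $V$ were arbitrary, $I_J^{\inf}(x) \leq I_J^{\sup}(x)$; equality of the two sups then yields the weak LDP by Proposition \ref{thm-Dembo-thm4.1.11}, and Proposition \ref{thm-Dembo-lem1.2.18} together with exponential tightness promotes this to the asserted good-rate-function LDP.

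The step I expect to be the main obstacle is the clean $j$-variable extension of Lemma \ref{lem-pinjie}: the letter-fixing argument must be performed simultaneously at each of the $2j$ path segments induced by the nested time points, and one must check that the polynomial correction remains uniform enough for the passage to the limit above. The gap-filling step is technical but routine, relying only on compactness of $[0,1]^j$ and the nearest-neighbor property of the walk.
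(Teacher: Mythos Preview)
Your proposal is correct and matches the paper's proof in all essentials: both establish exponential tightness trivially from the support constraint $Z_n(t_i)\le 1$, then invoke the concatenation Lemma~\ref{lem-pinjie} to verify the criterion of Proposition~\ref{thm-Dembo-thm4.1.11}, followed by a gap-filling step to pass from the arithmetic progression $\tilde n_k$ to general $n$. The only cosmetic differences are that the paper casts the argument as a proof by contradiction and isolates your gap-filling step as a separate preparatory lemma (Lemma~\ref{lem-mu_n^J-lowerbound}); like you, it states Lemma~\ref{lem-pinjie} only for $j=2$ and asserts that the general case is similar.
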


Before we prove Proposition \ref{prop-mu_n^J-ldp}, we give a lemma that enables us to derive a lower bound on the asymptotic exponential decay rate of the probabilities $\mu_n^{J} (B(x, \delta))$, provided a uniform lower bound over a non-lacunary sequence of times $(n_m)$.

\begin{lemm} \label{lem-mu_n^J-lowerbound}
Suppose there exist $ \rho > 0$, $\gamma \in \mathbb{R}$, and a strictly increasing sequence $(n_m)$ with
\begin{equation*}
\lim_{m \to \infty} \frac{n_{m+1}}{n_m} = 1,
\end{equation*}
such that
\begin{equation*}
\mu_{n_m}^J(B(x, \rho)) \geq e^{n_m \gamma} \quad \text{for all } m \geq 1.
\end{equation*}
Then for any $\delta > \rho $,
\begin{equation*}
\liminf_{n \to \infty} \frac{1}{n} \log \mu_n^J(B (x, \delta)) \geq \gamma.
\end{equation*}
\end{lemm}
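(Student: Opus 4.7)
The plan is to interpolate the given subsequential lower bound to all sufficiently large integers $n$ by a short coupling argument that exploits the nearest-neighbor nature of the walk together with the non-lacunarity $n_{m+1}/n_m \to 1$. For every large $n$, let $m = m(n)$ be the unique index with $n_m \leq n < n_{m+1}$. Non-lacunarity gives $n_m/n \to 1$ and $(n_{m+1} - n_m)/n_m \to 0$ as $n \to \infty$.

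The key step is to show the set-theoretic inclusion
\begin{equation*}
A_m := \left\{ \tfrac{1}{n_m}\bigl(l(Y_{[n_m t_1]}), \ldots, l(Y_{[n_m t_j]})\bigr) \in B(x, \rho) \right\} \ \subset \ \left\{ \tfrac{1}{n}\bigl(l(Y_{[n t_1]}), \ldots, l(Y_{[n t_j]})\bigr) \in B(x, \delta) \right\}
\end{equation*}
for all $n$ large enough. Since $(Y_n)$ is a nearest-neighbor walk, $|l(Y_p) - l(Y_q)| \leq |p - q|$; together with $|[n t_i] - [n_m t_i]| \leq (n - n_m) + 1 \leq (n_{m+1} - n_m) + 1$ this gives a uniform bound on $|l(Y_{[nt_i]}) - l(Y_{[n_m t_i]})|$. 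Plugging into the triangle inequality, on the event $A_m$ one obtains
\begin{equation*}
\left| \frac{l(Y_{[nt_i]})}{n} - x_i \right| \leq \rho + \frac{(n_{m+1} - n_m)(1 + x_i) + 1}{n_m},
\end{equation*}
and the second summand tends to zero as $m \to \infty$. Hence for $m$ large enough the right-hand side is $< \delta$, which establishes the inclusion.

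The conclusion then follows at once: the inclusion implies $\mu_n^J(B(x, \delta)) \geq \pr(A_m) = \mu_{n_m}^J(B(x, \rho)) \geq e^{n_m \gamma}$, and taking $\tfrac{1}{n} \log$ of both sides gives $\tfrac{1}{n} \log \mu_n^J(B(x, \delta)) \geq \tfrac{n_m}{n}\gamma$. Since $n_m/n \to 1$, the liminf of the right side is $\gamma$, as required. No step presents a real obstacle; the only subtlety worth flagging is the sign of $\gamma$ when comparing $(n_m/n)\gamma$ to $\gamma$ (both $\gamma \geq 0$ and $\gamma < 0$ are handled by $n_m/n \to 1$, not by $n_m/n \leq 1$), and the need to take $m$ large enough that the error term above falls below $\delta - \rho$.
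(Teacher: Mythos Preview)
Your proposal is correct and follows essentially the same route as the paper's proof: both pick, for each large $n$, the index $m$ with $n_m \le n < n_{m+1}$, use the nearest-neighbor Lipschitz bound $|l(Y_p)-l(Y_q)|\le |p-q|$ together with $|[nt_i]-[n_mt_i]|\le (n-n_m)+1$ to show the event at scale $n_m$ with radius $\rho$ is contained in the event at scale $n$ with radius $\delta$, and then conclude via $\mu_n^J(B(x,\delta))\ge \mu_{n_m}^J(B(x,\rho))\ge e^{n_m\gamma}$ and $n_m/n\to 1$. Your explicit flag about the sign of $\gamma$ (that one must use $n_m/n\to 1$ rather than merely $n_m/n\le 1$) is a nice clarification the paper leaves implicit.
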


\begin{proof}[Proof of Lemma \ref{lem-mu_n^J-lowerbound}]
Since $(n_m)$ is strictly increasing and $\lim_{m \to \infty} n_{m+1}/n_m = 1$,
there exists a unique $m = m(N)$ such that $n_m \leq N < n_{m+1}$ for each $N \geq n_1$. And there also exists $m_0 \in \mathbb{N}$ such that for all $k > m_0$,
\begin{equation*}
n_{k+1} - n_k < \frac{\delta - \rho}{2 + \max_{1\leq i\leq j} x_i + \max_{1\leq i\leq j} t_i} \cdot n_k.
\end{equation*}

Assume $m = m(N) > m_0$ and
\begin{equation*}
(Z_{n_m}(t_1), \ldots, Z_{n_m}(t_j)) \in B (x, \rho).
\end{equation*}
Then, for each $1\leq i\leq j$,
\begin{align*}
|l(Y_{[N t_i]}) - N x_i| 
&\leq |l(Y_{[N t_i]}) - l(Y_{[n_m t_i]})| + |l(Y_{[n_m t_i]}) - n_m x_i| + |n_m x_i - N x_i| \\
&< (n_{m+1} - n_m)(2 + x_i + t_i) + n_m \rho \leq \delta N.
\end{align*}
Thus,
\begin{equation*}
\mu_N^J(B (x, \delta )) \geq \mu_{n_m}^J(B(x, \rho)) \quad \Rightarrow \quad
\frac{1}{N} \log \mu_N^J(B (x, \delta )) \geq \frac{n_m}{N} \gamma.
\end{equation*}
Therefore, taking $N \to \infty$ which satisfies $m = m(N) > m_0$, we get the result
\begin{equation*}
\liminf_{n \to \infty} \frac{1}{n} \log \mu_n^J(B(x, \delta )) \geq \gamma.
\end{equation*}
\end{proof}

Now we can prove the existence of the LDP for $( \mu_n^J )$. 

\begin{proof}[Proof of Proposition \ref{prop-mu_n^J-ldp}]
By Proposition \ref{thm-Dembo-lem1.2.18}, it suffices to verify that the sequence $(\mu_n^J )$ is exponentially tight and satisfies the weak LDP.

\textbf{Exponential tightness.}  
Fix $M > 1$. Since $Z_n(t_i) \leq t_i \leq 1$ for all  $1\leq i \leq j$, we have that
$\mu_n^J(([0, M]^j)^C) = 0$. Hence,
\begin{equation*}
\lim_{M \to \infty} \frac{1}{n} \log \mu_n^J(([0, M]^j)^C) = -\infty.
\end{equation*}
Thus $(\mu_n^J )$ is exponentially tight due to the compactness of  $[0, M]^j \subset \mathbb{R}^j$.

\textbf{Weak LDP.}  
For every $x \in (\mathbb{R}_{\geq 0})^j$, define 
\begin{equation*}
I(x) := \sup_{V open: x\in V} -\liminf_{n \to \infty} \frac{1}{n} \log \mu_n^J(V).
\end{equation*}
Then by  Proposition \ref{thm-Dembo-thm4.1.11}, we only have to show
\begin{equation} \label{equ-I(x)-sup}
I(x) = \sup_{V open: x\in V}  -\limsup_{n \to \infty} \frac{1}{n} \log \mu_n^J(V) .
\end{equation}

For convenience, we assume $j=2$, i.e., $J=(t_1, t_2)$. And the case $j\geq 3$ is similar. Suppose \eqref{equ-I(x)-sup} does not hold. Then there is $x =(x_1,x_2)  \in (\mathbb{R}_{\geq 0})^2$ such that 
\begin{equation*}
	I(x) >\sup_{V open: x\in V}  -\limsup_{n \to \infty} \frac{1}{n} \log \mu_n^J(V). 
\end{equation*}
Hence there exist $\delta, \eta > 0$ such that
\begin{equation*}
\liminf_{n \to \infty} \frac{1}{n} \log \mu_n^J(B(x, \delta)) 
< \inf_{\rho > 0} \limsup_{n \to \infty} \frac{1}{n} \log \mu_n^J(B(x, \rho)) - \eta.
\end{equation*}
Fix $0 <\rho < \delta$. Then there exists a strictly increasing subsequence $(n_m)_{ m\geq 1}$  such that 
\begin{equation*}
	\alpha < \beta_{n_m} - \eta,
\end{equation*}
where 
\begin{equation*}
\alpha = \liminf_{n \to \infty} \frac{1}{n} \log \mu_n^J (B(x, \delta)) \, \text{ and } \, \beta_{n_m} = \frac{1}{n_m} \log \mu_{n_m}^J (B(x, \rho)).	
\end{equation*}

By Lemma \ref{lem-pinjie}, we know that for $n_m$ large enough, there exits a $n_m$-step path set $\bm{B} \subset \bm{F}_{n_m, \rho}^{J}(x)$ such that 
\begin{equation*}
	\nu_{n_m} (\bm{B}) \geq  \frac{1}{(n_m+1)^5 d^6}  \nu_{n_m} \left(\bm{F}_{n_m, \rho}^{J} (x) \right) = \frac{1}{(n_m + 1)^5 d^6} e^{n_m \beta_{n_m}},
\end{equation*}
where the right equality is due to the equality \eqref{equ-mu-nu}.
And for paths $\bm{g}_1, \ldots, \bm{g}_k \in \bm{B}$, the concatenation $\bm{h} = \bm{h}(\bm{g}_1, \ldots, \bm{g}_k)$ has step length $\tilde{n}_{m_k} = k n_m + 4(k - 1)$  and satisfies
\begin{equation*}
\frac{1}{\tilde{n}_{m_k}} \left( l(h([ \tilde{n}_{m_k} t_1])), \, l(h([\tilde{n}_{m_k} t_2])) \right) \in B(x, \rho')
\end{equation*}
for $\rho' >\rho$. Hence we have 
\begin{eqnarray*}
	\mu_{\tilde{n}_{m_k}}^J (B(x, \rho')) \geq \nu_{n_m} (\bm{B})^k p^{4(k - 1)} 
	\geq \left( \frac{1}{(n_m + 1)^5 d^6} e^{n_m \beta_{n_m}} \right)^k p^{4(k - 1)},
\end{eqnarray*}
where $p=\min_{1\leq i \leq d} p_i >0 $.  For $n_m$ large enough, we have that $n_{m}\eta \geq 2(\beta_{n_m}-\eta)+ 5 \log (n_{m}+1) + 6\log d - 4 \log p$.
Therefore, 
\begin{equation*}
	\mu_{\tilde{n}_{m_k}}^J(B(x, \rho')) \geq e^{\tilde{n}_{m_k} (\beta_{n_m} - \eta)}.
\end{equation*}
Then taking $\rho'\in (\rho, \delta)$ and applying Lemma~\ref{lem-mu_n^J-lowerbound}, we conclude that
\begin{equation*}
\alpha =\liminf_{n \to \infty} \frac{1}{n} \log \mu_n^J (B(x, \delta)) \geq \beta_{n_m} - \eta,
\end{equation*}
which is a contradiction. Hence equation \eqref{equ-I(x)-sup} holds.

Therefore, $( \mu_n^J )$ satisfies the LDP in $\mathbb{R}^j$ with the good rate function
\begin{equation*}
I_J(x) = \sup_{V open: x\in V} \left( -\limsup_{n \to \infty} \frac{1}{n} \log \mu_n^J(V) \right)=\sup_{V open: x\in V} \left( -\liminf_{n \to \infty} \frac{1}{n} \log \mu_n^J(V) \right).
\end{equation*}
\end{proof}

For the rate function $I_J$, we can express it as the Fenchel-Legendre transform of the log-moment generating function. 
Recall that the log-moment generating function of the probability measure $\mu_n^J$ is defined by
\begin{equation*}
	 \Lambda^J_n(\lambda) :=\log  \int_{ (\mathbb{R}_{\geq 0})^j} e^{\langle \lambda,x \rangle}  \mu_n^J (d x) \quad \lambda \in \mathbb{R}^j,
\end{equation*}
where  $\langle \cdot , \cdot \rangle$ is the scalar product in $\mathbb{R}^{j}$.

\begin{lemm} \label{lem-mu_n^J-ratefun}
For every $\lambda \in \mathbb{R}^j$, the limit $\Lambda^J(\lambda) =\lim_{n \to \infty} \frac{1}{n} \Lambda^J_n( n\lambda)$ exists and is finite. Furthermore, $I_J$ is the Fenchel-Legendre transform of $\Lambda^J$, namely,
\begin{equation*}
I_J(x) = \sup_{\lambda \in \mathbb{R}^j} \left\{ \langle \lambda, x \rangle - \Lambda^J(\lambda) \right\}, \quad \forall x \in (\mathbb{R}_{\geq 0})^j.
\end{equation*}
\end{lemm}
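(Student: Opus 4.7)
The plan is to deduce the statement by a direct application of Proposition~\ref{thm-Dembo-thm4.5.10} to the sequence $(\mu_n^J)$ on $\mathbb{R}^j$. That proposition requires two hypotheses: finiteness of $\bar{\Lambda}^J(\lambda) := \limsup_n \Lambda_n^J(n\lambda)/n$ for every $\lambda$, and the LDP for $(\mu_n^J)$ with a good convex rate function. Once both are verified, the proposition simultaneously delivers the existence and finiteness of the limit $\Lambda^J(\lambda)$ and the asserted Fenchel-Legendre duality $I_J(x) = \sup_\lambda \{\langle \lambda, x\rangle - \Lambda^J(\lambda)\}$.

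The first hypothesis is immediate from the deterministic bound $l(Y_k)\leq k$, which forces every coordinate $Z_n(t_i) = l(Y_{[nt_i]})/n$ into $[0,1]$ almost surely. Hence $\langle \lambda, p_J(Z_n)\rangle \leq \sum_i|\lambda_i|$, so $\Lambda_n^J(n\lambda) \leq n\sum_i|\lambda_i|$ and $\bar{\Lambda}^J(\lambda) \leq \sum_i|\lambda_i| < \infty$ for every $\lambda \in \mathbb{R}^j$. The LDP with a good rate function $I_J$ is Proposition~\ref{prop-mu_n^J-ldp}, which I will cite as a black box. What remains is the convexity of $I_J$, which I expect to be the principal technical obstacle.

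My plan for convexity is to extend the path concatenation scheme of Lemma~\ref{lem-pinjie} to mix paths with two different target vectors. Given $x, y \in D_{I_J}$ and a rational $\theta = p/q \in (0,1)$, I would use the LDP lower bound from Proposition~\ref{prop-mu_n^J-ldp} to extract, for $n$ along a suitable non-lacunary sequence, good subsets $\bm{B}_x \subset \bm{F}_{n,\rho}^J(x)$ and $\bm{B}_y \subset \bm{F}_{n,\rho}^J(y)$ of probabilities at least $e^{-n(I_J(x)+\varepsilon)}$ and $e^{-n(I_J(y)+\varepsilon)}$ respectively, in which all paths share fixed initial and terminal letters along each relevant sub-segment, in the style of the construction at the start of the proof of Lemma~\ref{lem-pinjie}. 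The crucial step is then to splice $p$ paths from $\bm{B}_x$ with $q-p$ paths from $\bm{B}_y$, interleaved with the bridging letters of Lemma~\ref{lem-pinjie}, and verify that the composite path of length $\tilde{N} = qn + O(1)$ has word length $\tilde{N}(\theta x_i + (1-\theta)y_i) + O(1)$ at each time $[\tilde{N} t_i]$. Feeding this into the probability bound, letting $n \to \infty$ and $\varepsilon \downarrow 0$, and then letting $\theta$ range over rationals, yields midpoint convexity, which the lower semicontinuity of $I_J$ promotes to full convexity on $D_{I_J}$.

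The delicate point is that, unlike in Lemma~\ref{lem-pinjie}, the composite time index $[\tilde{N} t_i]$ no longer falls in the same relative position of each concatenated segment, so the bookkeeping must track exactly which segment the crossing occurs in and how the bridging letters contribute to the word length at that moment; this likely requires a variant of Lemma~\ref{lem-pinjie} in which the path targets and the interior time points of the two components may be rescaled before splicing. With such an extended concatenation in hand, convexity of $I_J$ follows, and Proposition~\ref{thm-Dembo-thm4.5.10} closes the argument.
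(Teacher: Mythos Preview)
Your approach is correct and essentially coincides with the paper's: apply Proposition~\ref{thm-Dembo-thm4.5.10}, verify finiteness of $\bar{\Lambda}^J$ via the trivial bound $l(Y_k)\le k$, invoke Proposition~\ref{prop-mu_n^J-ldp} for the LDP with good rate function, and prove convexity of $I_J$ by a concatenation argument extending Lemma~\ref{lem-pinjie} to two different target vectors.

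The one place you over-engineer is the convexity step. You propose splicing $p$ paths near $x$ with $q-p$ paths near $y$ to handle a general rational $\theta=p/q$, and you rightly flag the resulting bookkeeping headache: for $q>2$ the composite time index $[\tilde N t_i]$ can land in different relative positions across the segments. The paper sidesteps this entirely by proving only \emph{midpoint} convexity, concatenating exactly one path $\bm g_x\in\bm B(x)$ with one path $\bm g_y\in\bm B(y)$ into a path of length $\tilde n=2n+4$. With just two pieces the interleaving is explicit and the time-alignment estimate is the same computation as in Lemma~\ref{lem-pinjie}. Since you already note that lower semicontinuity upgrades midpoint convexity to full convexity, restricting to $\theta=1/2$ loses nothing and eliminates the delicate point you anticipated.
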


\begin{proof}[Proof of Lemma \ref{lem-mu_n^J-ratefun}]
By Proposition \ref{prop-mu_n^J-ldp}, the sequence $(\mu_n^J)$ satisfies the LDP with  the good rate function $I_J$. Hence due to Proposition \ref{thm-Dembo-thm4.5.10}, it suffices to show that $I_J$ is convex and the upper limit log-moment generating function 
\begin{equation*}
	\bar{\Lambda}^J(\lambda) :=\limsup_{n \to \infty} \frac{1}{n}\Lambda^J_n( n\lambda)
\end{equation*}
is finite everywhere. 

For each $\lambda = (\lambda_1, \ldots, \lambda_j)$, we have
\begin{eqnarray*}
	\bar{\Lambda}^J(\lambda) &=& \limsup_{n \to \infty} \frac{1}{n}\log  \int_{ (\mathbb{R}_{\geq 0})^j} e^{ n\langle \lambda,x \rangle}  \mu_n^J (d x) 
	= \limsup_{n \to \infty} \frac{1}{n} \log \mathbb{E} \left[ e^{n\sum_{i=1}^j \lambda_i Z_n(t_i)} \right] \\
	&=& \limsup_{n \to \infty} \frac{1}{n} \log \mathbb{E} \left[ e^{\sum_{i=1}^j \lambda_i l(Y_{[n t_i]})} \right] 
	\leq  \limsup_{n \to \infty} \frac{1}{n} \sum_{i=1}^j |\lambda_i| [n t_i]
	\leq  \sum_{i=1}^j |\lambda_i| t_i < \infty.
\end{eqnarray*}
Therefore, it remains to prove that $I_J$ is convex. We first verify the midpoint convexity of $I_J$: for any $x, y \in (\mathbb{R}_{\geq 0})^j$,
\begin{equation*}
I_J\left( \frac{1}{2} x + \frac{1}{2} y \right) \leq \frac{1}{2} I_J(x) + \frac{1}{2} I_J(y).
\end{equation*}

Suppose for contradiction that there exist $x, y \in (\mathbb{R}_{\geq 0})^j$ such that
\begin{equation} \label{inequ-midconvex}
I_J\left( \frac{1}{2} x + \frac{1}{2} y \right) > \frac{1}{2} I_J(x) + \frac{1}{2} I_J(x).
\end{equation}
Recall that by Proposition \ref{prop-mu_n^J-ldp},
\begin{equation*}
I_J(x) = \sup_{V open: x\in V} \left( -\limsup_{n \to \infty} \frac{1}{n} \log \mu_n^J(V) \right) = \sup_{V open: x\in V} \left( -\liminf_{n \to \infty} \frac{1}{n} \log \mu_n^J(V) \right).
\end{equation*}
Then \eqref{inequ-midconvex} implies that there exist $\delta, \eta > 0$ such that for any $\rho_1, \rho_2 > 0$,
\begin{align} \label{inequ-midconvex1}
\limsup_{n \to \infty} & \frac{1}{n} \log \mu_n^J\left( B\left( \frac{1}{2} x + \frac{1}{2} y, \delta \right) \right) \\
&< \frac{1}{2} \liminf_{n \to \infty} \frac{1}{n} \log \mu_n^J(B(x, \rho_1)) 
 + \frac{1}{2} \liminf_{n \to \infty} \frac{1}{n} \log \mu_n^J(B(y, \rho_2)) - \eta.
\end{align}

For simplicity, we assume $j = 2$ and choose $\rho_1 = \rho_2 = \rho < \delta$. 
We claim that there exists $n_0$ such that for all $n > n_0$ and $\tilde{n} = 2n + 4$,
\begin{align*}
\frac{1}{\tilde{n}} \log \mu_{\tilde{n}}^J \left( B\left( \frac{1}{2} x + \frac{1}{2} y, \delta \right) \right)
\geq \frac{1}{2} \left( \frac{1}{n} \log \mu_n^J(B(x, \rho)) + \frac{1}{n} \log \mu_n^J(B(y, \rho)) \right) - \eta.
\end{align*}
This contradicts \eqref{inequ-midconvex1}. Hence, we get the midpoint convexity of $I_J$.

We now justify the claim. Following the same strategy as in Proposition \ref{prop-mu_n^J-ldp}, define the $n$-step paths sets $\bm{F}_{n, \rho}^{J}(x), \, \bm{F}_{n, \rho}^{J}(y)$, and let
\begin{equation*}
\beta_x = \frac{1}{n} \log \mu_n^J(B(x, \rho)), \quad 
\beta_y = \frac{1}{n} \log \mu_n^J(B(y, \rho)).
\end{equation*}
Hence, by Lemma \ref{lem-pinjie}, there exist subsets $\bm{B}(x) \subset \bm{F}_{n, \rho}^{J}(x)$ and $\bm{B}(y) \subset \bm{F}_{n, \rho}^{J}(y)$ such that 
\begin{equation*}
v_n(\bm{B}(x)) \geq \frac{1}{(n + 1)^5 d^6} e^{n \beta_x}, \quad
v_n(\bm{B}(y)) \geq \frac{1}{(n + 1)^5 d^6} e^{n \beta_y},
\end{equation*}
and there exist $a, b, c \in S$ ensuring the cancellation-free concatenation.

For $\bm{g}_x \in \bm{B}(x)$ and $\bm{g}_y \in \bm{B}(y)$, define the concatenated path $\bm{h} = \bm{h}(\bm{g}_x, \bm{g}_y)$ by
\begin{align*}
\bm{h}(\bm{g}_x, \bm{g}_y)
=& \bm{g}_x(m_1^x) \oplus a \oplus \bm{g}_y(m_1^y) \oplus
\bm{g}_y(m_1^y+1, [n t_1]) \oplus b \oplus \bm{g}_x(m_1^x+1, [n t_1]) \oplus \\
& \bm{g}_x([n t_1]+1, m_2^x) \oplus b \oplus \bm{g}_y([n t_1]+1, m_2^y) \oplus
\bm{g}_y(m_2^y+1, [n t_2]) \oplus c \oplus \bm{g}_x(m_2^x+1, [n t_2]) \oplus \\
& \bm{g}_x([n t_2]+1, n) \oplus \bm{g}_y([n t_2]+1, n).
\end{align*}
Then the path $\bm{h}$ has step length  $\tilde{n} = 2n + 4$ and satisfies
\begin{equation*}
\frac{1}{\tilde{n}} \left( l(h([\tilde{n} t_1]), \, l(h([\tilde{n} t_2]) \right) \in B \left(\frac{x + y}{2}, \delta \right).
\end{equation*}
Hence, the paths set 
\begin{equation*}
\bm{H} := \left\{ \bm{\tilde{h}}(\bm{g}_x, \bm{g}_y) : \bm{g}_x \in \bm{B}(x), \,\bm{g}_y \in \bm{B}(y) \right\} \subset \bm{\tilde{F}}_{\tilde{n}, \delta}^{J}\left(\frac{x + y}{2}\right),
\end{equation*}
Therefore for $n$ large enough,
\begin{eqnarray*}
	\mu_{\tilde{n}}^J \left( B \left( \frac{x + y}{2}, \delta \right) \right)
	&=& \nu_{\tilde{n}} \left(  \bm{\tilde{F}}_{\tilde{n}, \delta}^{J} \left(\frac{x + y}{2}\right)\right) \geq  \nu_{\tilde{n}}(\bm{H}) 
	\geq  v_n(\bm{B}(x)) \cdot v_n(\bm{B}(y)) \cdot p^{4} \\
	&\geq &  \frac{p^4}{(n + 1)^{10} d^{12}} e^{n(\beta_x + \beta_y)} \geq e^{\tilde{n} \left( \frac{\beta_x + \beta_y}{2} - \eta \right)},
\end{eqnarray*}
where $p = \min_{1\leq i \leq d} p_i>0$. This proves the claim. And we get the midpoint convexity.

Therefore, since $I_J$ is lower semicontinuous and midpoint convex, it is convex by iteration. Hence, the log-moment generating function 
\begin{equation*}
	\Lambda^J(\lambda) =\lim_{n \to \infty} \frac{1}{n} \Lambda^J_n( n\lambda) =\lim_{n \to \infty} \frac{1}{n}\log  \int_{ (\mathbb{R}_{\geq 0})^j} e^{ n\langle \lambda,x \rangle}  \mu_n^J (dx) = \lim_{n \to \infty} \frac{1}{n} \log \mathbb{E} \left[ e^{\sum_{i=1}^j \lambda_i l(Y_{[n t_i]})} \right]<\infty 
\end{equation*}
for every $\lambda \in \mathbb{R}^j$ and the rate function has the expression
 \begin{equation*}
 	I_J(x) = \sup_{\lambda \in \mathbb{R}^j} \left\{ \langle \lambda, x \rangle - \Lambda^J(\lambda) \right\}, \quad \forall x \in (\mathbb{R}_{\geq 0})^j.
 \end{equation*}
\end{proof}

\subsection{Large Deviations in the Projective Limit Space} \label{sec-touying}
 Recall that $Z_n(\cdot)$ is the sample path function of the random walk. Define 
\begin{equation*}
\tilde{Z}_n(t) := Z_n(t) + \left(t - \frac{[nt]}{n} \right)\left( l(Y_{[nt]+1}) - l(Y_{[nt]}) \right), \ 0\leq t \leq 1.
\end{equation*}
and let $\tilde{\mu}_n$ be the law of $\tilde{Z}_n(\cdot)$ in $L_{\infty}([0,1])$. By construction, it is easy to see that $\tilde{Z}_n(\cdot)$ is absolutely continuous, and it connects the $n$-quantile points of $Z_n(\cdot)$ and they coincides at those points.

 We first show that $(\tilde{\mu}_n)$ and $(\mu_n)$ are exponentially equivalent. Therefore, the LDP for $(\mu_n)$ is the same as that for $(\tilde{\mu}_n)$.

\begin{lemm} \label{lem-expequ}
The probability measures $(\mu_n)$ and $(\tilde{\mu}_n)$ are exponentially equivalent in $L_\infty([0,1])$.
\end{lemm}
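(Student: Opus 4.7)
The plan is to realize both $Z_n$ and $\tilde{Z}_n$ on the same probability space (driven by the same sequence $(X_k)$) and then exhibit a deterministic sup-norm bound $\|Z_n - \tilde{Z}_n\|_\infty \leq 1/n$; once this bound is in hand, exponential equivalence follows immediately, because the event $\{\|Z_n - \tilde{Z}_n\| > \delta\}$ is empty for all $n > 1/\delta$.

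More precisely, I would first fix the canonical coupling: let $(\Omega, \mathcal{B}_n, P_n)$ carry the i.i.d.\ step variables $X_1, \dots, X_n$ with law $\mu$, set $Y_k = X_1 \cdots X_k$, and define $Z_n(\cdot)$ and $\tilde{Z}_n(\cdot)$ both from this single random walk. By construction the marginals are $\mu_n$ and $\tilde{\mu}_n$, so the coupling requirement of the definition of exponential equivalence is met.

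Next I would estimate the pointwise deviation. For any $t \in [0,1]$ with $[nt]=k$, the definition gives
\begin{equation*}
\tilde{Z}_n(t) - Z_n(t) = \left(t - \tfrac{k}{n}\right)\bigl(l(Y_{k+1}) - l(Y_k)\bigr).
\end{equation*}
The first factor lies in $[0, 1/n)$ because $t - [nt]/n < 1/n$, and the second factor equals $\pm 1$ since $(Y_n)$ is a nearest-neighbor walk on $T_d$, so consecutive word lengths differ by exactly one. Hence $|\tilde{Z}_n(t) - Z_n(t)| \leq 1/n$ for every $t$, and therefore $\|\tilde{Z}_n - Z_n\|_\infty \leq 1/n$.

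Finally I would conclude: for any $\delta > 0$, choose $n_0 > 1/\delta$; then for every $n \geq n_0$ the set $\Gamma_\delta = \{\|Z_n - \tilde{Z}_n\| > \delta\}$ is empty, so $P_n(\Gamma_\delta) = 0$ and
\begin{equation*}
\limsup_{n\to\infty} \frac{1}{n} \log P_n(\Gamma_\delta) = -\infty,
\end{equation*}
which is exactly the exponential equivalence criterion. There is no real obstacle here; the only point worth flagging is the use of the nearest-neighbor property to get the uniform bound on $|l(Y_{k+1}) - l(Y_k)|$, which is what makes the difference deterministic of order $1/n$ rather than merely small in probability.
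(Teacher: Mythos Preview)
Your proposal is correct and follows exactly the same approach as the paper: establish the deterministic bound $\|Z_n-\tilde Z_n\|\le 1/n$ (the paper states it without spelling out the nearest-neighbor step you make explicit) and conclude that $P_n(\Gamma_\delta)=0$ for $n>1/\delta$. There is nothing to add.
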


\begin{proof}[Proof of Lemma~\ref{lem-expequ}]
For any $\eta > 0$, it is obvious that the set $\{\omega: \| \tilde{Z}_n - Z_n \| > \eta\}$ is measurable. By the definition of $\tilde{Z}_n$, we have $|\tilde{Z}_n(t) - Z_n(t)| < \frac{1}{n}$. Hence, for $n > 1/\eta$,
\begin{equation*}
\mathbb{P}(\| \tilde{Z}_n - Z_n \| > \eta) = 0,
\end{equation*}
which implies
\begin{equation*}
\limsup_{n \to \infty} \frac{1}{n} \log \mathbb{P}(\| \tilde{Z}_n - Z_n \| > \eta) = -\infty.
\end{equation*}
Therefore, $(\tilde{\mu}_n)$ and $(\mu_n)$ are exponentially equivalent.
\end{proof}

Then we use the Dawson-G{\"a}rtner theorem to establish the LDP on the projective limit space. Recall that the collection of all ordered finite subsets of $(0,1]$ is
\begin{equation*}
\mathcal{J} = \left\{ (t_1, t_2,\ldots, t_j)\in (0,1]^j: j \in \mathbb{N}, t_1<t_2<\cdots <t_j  \right\}.
\end{equation*}
and we can define a partial order $\preccurlyeq$ on $\mathcal{J}$ by: $K = \{s_1 < \cdots < s_k\} \preccurlyeq J = \{t_1 < \cdots < t_j\}$ if for each $s_\ell$ there exists $t_{q(\ell)}$ such that $s_\ell = t_{q(\ell)}$.  Hence there exists a nature projective mapping 
\begin{equation*}
p_{KJ}: \mathbb{R}^j \to \mathbb{R}^k
\end{equation*}
for $K \preccurlyeq J$ and the projective limit  $\tilde{\mathcal{X}}$ of the system $(\mathcal{Y}_J = \mathbb{R}^j, p_{KJ} )$, i.e.,
\begin{equation*}
\tilde{\mathcal{X}} := \varprojlim \mathcal{Y}_J = \left\{ (y_K)_{K \in \mathcal{J}} \in \prod_{K \in \mathcal{J}} \mathcal{Y}_K \mid y_K = p_{KJ}(y_J) \text{ whenever } K \preccurlyeq J \right\}.
\end{equation*}

Let $\mathcal{X}$ be the space of real-valued functions on $[0,1]$ starting at $0$, endowed with the pointwise convergence topology, in particular,
\begin{equation*}
\mathcal{X} := \{ f: [0,1] \to \mathbb{R} \mid f(0) = 0 \}.
\end{equation*}
Then, we can identify the space $\tilde{\mathcal{X}}$ with $\mathcal{X}$. Indeed, for each $f \in \mathcal{X}$, the tuple $(p_J(f))_{J \in \mathcal{J}}$ lies in $\tilde{\mathcal{X}}$, where $p_J(f) = (f(t_1), f(t_2), \ldots, f(t_{j}))$. Conversely, any $(x_J)_{J \in \mathcal{J}} \in \tilde{\mathcal{X}}$ determines a function $f \in \mathcal{X}$ by $f(t) := x_{\{t\}}$ for $t > 0$ and $f(0) := 0$. 

It is easy to see that $\tilde{Z}_n(\cdot) \in \mathcal{X}$. Thus the probability measures $(\tilde{\mu}_n)$ can be naturally embedded into $\mathcal{X}$, and hence into $\tilde{\mathcal{X}}$. Therefore, we can use the Dawson-G{\"a}rtner theorem.

\begin{lemm} \label{lem-tildemu_n-ldp}
The probability measures sequence $(\tilde{\mu}_n)$ (defined on $\mathcal{X}$ by the natural embedding) satisfies the LDP with the rate function
\begin{equation*}
I_{\mathcal{X}}(f) = \sup_{J \in \mathcal{J}} I_J(p_J(f)), \quad f \in \mathcal{X}.
\end{equation*}
\end{lemm}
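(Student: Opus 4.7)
The plan is to invoke the Dawson--G\"artner projective-limit theorem (Proposition~\ref{thm-Dembo-thm4.6.1}) applied to the system $(\mathcal{Y}_J=\mathbb{R}^j, p_{KJ})_{K\preccurlyeq J\in\mathcal{J}}$, whose projective limit $\tilde{\mathcal{X}}$ has already been identified in the excerpt with $\mathcal{X}$ (real-valued functions on $[0,1]$ vanishing at $0$, equipped with the pointwise convergence topology). To apply it, I need to verify that for every $J\in\mathcal{J}$ the finite-dimensional pushforward $\tilde{\mu}_n\circ p_J^{-1}$ satisfies the LDP on $\mathbb{R}^j$ with the good rate function $I_J$ from Proposition~\ref{prop-mu_n^J-ldp}.

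The first step is to compare the finite-dimensional marginals of $\tilde{\mu}_n$ and $\mu_n$. Since $\|\tilde{Z}_n-Z_n\|_\infty\leq 1/n$ by construction, the random vectors $p_J(\tilde{Z}_n)$ and $p_J(Z_n)$ differ by at most $\sqrt{j}/n$ in Euclidean distance, so for any fixed $\delta>0$,
\begin{equation*}
\mathbb{P}\bigl(|p_J(\tilde{Z}_n)-p_J(Z_n)|>\delta\bigr)=0 \quad \text{for all } n>\sqrt{j}/\delta.
\end{equation*}
This is more than enough to conclude that $(\tilde{\mu}_n\circ p_J^{-1})$ and $(\mu_n^J)$ are exponentially equivalent on $\mathbb{R}^j$. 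By Proposition~\ref{prop-mu_n^J-ldp} the sequence $(\mu_n^J)$ obeys the LDP on $\mathbb{R}^j$ with good rate function $I_J$, so Proposition~\ref{thm-Dembo-thm4.2.13} transfers this LDP, with the same good rate function $I_J$, to $(\tilde{\mu}_n\circ p_J^{-1})$.

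With the finite-dimensional LDPs secured for every $J\in\mathcal{J}$, I would then apply the Dawson--G\"artner theorem (Proposition~\ref{thm-Dembo-thm4.6.1}) on the projective limit $\tilde{\mathcal{X}}\cong\mathcal{X}$ to conclude that $(\tilde{\mu}_n)$ satisfies the LDP on $\mathcal{X}$ with the good rate function
\begin{equation*}
I_{\mathcal{X}}(f)=\sup_{J\in\mathcal{J}} I_J(p_J(f)),\qquad f\in\mathcal{X}.
\end{equation*}

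The only point that requires any care, and hence the main (minor) obstacle, is making the identification $\tilde{\mathcal{X}}\cong\mathcal{X}$ precise: one must check that the consistency condition $y_K=p_{KJ}(y_J)$ forces any coherent family $(x_J)_{J\in\mathcal{J}}\in\tilde{\mathcal{X}}$ to be produced by a single function via $f(t):=x_{\{t\}}$ for $t>0$ and $f(0):=0$, and that under this bijection the projective-limit topology coincides with the pointwise convergence topology on $\mathcal{X}$, while the canonical projection $P_J$ corresponds exactly to the evaluation map $f\mapsto p_J(f)$. This is sketched in the excerpt and is essentially routine. Once that identification is recorded, the Dawson--G\"artner step immediately yields the stated LDP with the claimed rate function.
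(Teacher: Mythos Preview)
Your proposal is correct and follows essentially the same route as the paper: establish the LDP for each finite-dimensional marginal $\tilde{\mu}_n\circ p_J^{-1}$ by exponential equivalence with $\mu_n^J$ (both arguments rest on $\|\tilde Z_n-Z_n\|_\infty\le 1/n$), then apply Dawson--G\"artner on the projective limit identified with $\mathcal{X}$. The only cosmetic difference is that the paper cites the $L_\infty$-level exponential equivalence (Lemma~\ref{lem-expequ}) and implicitly passes to the projections, whereas you verify the finite-dimensional exponential equivalence directly.
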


\begin{proof}[Proof of Lemma~\ref{lem-tildemu_n-ldp}]
Since the function space $\mathcal{X}$ is identified with the projective limit space $\tilde{\mathcal{X}}$, the pointwise convergence topology of $\mathcal{X}$ coincides with the projective topology on $\tilde{\mathcal{X}}$ and $p_J$ are the canonical projections for $\tilde{\mathcal{X}}$. Then by Proposition \ref{thm-Dembo-thm4.2.13}, Lemma \ref{lem-expequ}, and Proposition \ref{prop-mu_n^J-ldp}, the projective measures $\tilde{\mu}_n^J := \tilde{\mu}_n \circ p_J^{-1}$ satisfy the LDP in $\mathbb{R}^j$ with rate function $I_J$. Hence applying the Dawson–Gärtner theorem (Proposition \ref{thm-Dembo-thm4.6.1}), the sequence $(\tilde{\mu}_n)$ satisfies the LDP on $\tilde{\mathcal{X}}$ with the rate function
\begin{equation*}
I_{\tilde{\mathcal{X}}}(f) = \sup_{J \in \mathcal{J}} I_J(p_J(f)), \quad f \in \tilde{\mathcal{X}}.
\end{equation*}
Therefore, $(\tilde{\mu}_n)$ satisfies the LDP on $\mathcal{X}$ with the same rate function
\begin{equation*}
I_{\mathcal{X}}(f) = \sup_{J \in \mathcal{J}} I_J(p_J(f)), \quad f \in \mathcal{X}.
\end{equation*}
\end{proof}

\subsection{Proof of Theorem \ref{thm-ldp}}
Before we prove the Theorem \ref{thm-ldp}, we first show that the probability measures sequence $(\tilde{\mu}_n)$ is exponentially tight in $C_0([0,1])$ endowed with the topology of uniform convergence, where  $C_0([0,1])$ is the space of real-valued continuous functions on $[0,1]$ starting at $0$, in particular,  
\begin{equation*}
	C_0([0,1]):=\{f: [0,1] \to \mathbb{R} \mid f(0)=0 \text{ and $f$ continuous} \}.
\end{equation*}

\begin{lemm} \label{lem-tildemu_n-exptig}
Let $\tau_1$ be the topology of uniform convergence on $C_0([0,1])$. Then the sequence $(\tilde{\mu}_n )$ is exponentially tight in $(C_0([0,1]), \tau_1)$.
\end{lemm}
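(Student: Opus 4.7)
The plan is to exhibit a single compact subset $K \subset C_0([0,1])$ that absorbs \emph{all} of the mass of every $\tilde{\mu}_n$; once this is done, exponential tightness is immediate because $\tilde{\mu}_n(C_0([0,1]) \setminus K) = 0$ for every $n$, and one may take $K_\alpha := K$ for every $\alpha < \infty$.

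The crucial observation is that, by construction, $\tilde{Z}_n(\cdot)$ is the piecewise linear interpolation of $k \mapsto \tfrac{1}{n} l(Y_k)$ at the grid points $k/n$, $0 \leq k \leq n$. On each sub-interval $[k/n, (k+1)/n]$ the function $\tilde{Z}_n$ is linear with slope $l(Y_{k+1}) - l(Y_k)$. Since $(Y_n)$ is a nearest-neighbor random walk on the tree $T_d$, consecutive vertices differ by exactly one edge; in a tree, any such edge either strictly increases or strictly decreases the distance to the root, so $|l(Y_{k+1}) - l(Y_k)| = 1$. Consequently $\tilde{Z}_n(\cdot)$ is $1$-Lipschitz on $[0,1]$ and $\tilde{Z}_n(0) = 0$.

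Following this, I would define
\begin{equation*}
K := \{ f \in C_0([0,1]) : f(0) = 0,\ |f(t) - f(s)| \leq |t - s| \text{ for all } s, t \in [0,1] \}.
\end{equation*}
This family is uniformly bounded (because $|f(t)| = |f(t) - f(0)| \leq t \leq 1$) and trivially equicontinuous, so by the Arzelà-Ascoli theorem $K$ is a compact subset of $(C_0([0,1]), \tau_1)$. By the observation above, $\tilde{Z}_n(\cdot) \in K$ surely for every $n$, whence $\tilde{\mu}_n(C_0([0,1]) \setminus K) = 0$. Setting $K_\alpha = K$ for each $\alpha < \infty$ gives
\begin{equation*}
\limsup_{n \to \infty} \frac{1}{n} \log \tilde{\mu}_n(C_0([0,1]) \setminus K_\alpha) = -\infty < -\alpha,
\end{equation*}
which is the desired exponential tightness.

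There is essentially no obstacle here: the polygonal approximation $\tilde{Z}_n$ was introduced precisely so that the nearest-neighbor identity $|l(Y_{k+1}) - l(Y_k)| = 1$ would promote the discrete sample paths to genuinely continuous, \emph{uniformly} Lipschitz functions, lying in a single Arzelà-Ascoli compact set. The only fact that really needs to be invoked is that in a tree moving by one edge changes the distance to the root by exactly $\pm 1$; the rest is bookkeeping.
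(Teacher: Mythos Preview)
Your proof is correct and follows essentially the same approach as the paper: both use the nearest-neighbor identity $|l(Y_{k+1})-l(Y_k)|=1$ to conclude that every $\tilde{Z}_n$ lies in a fixed set of uniformly Lipschitz functions, which is compact by Arzel\`a--Ascoli, so that $\tilde{\mu}_n$ of its complement is zero. The only cosmetic differences are that the paper parametrizes the Lipschitz constant as $1+\alpha$ (unnecessarily, since $1$ already works, as you observe) and explicitly checks closedness of the Lipschitz ball before invoking Arzel\`a--Ascoli---you might add one line noting $K$ is closed under uniform limits, since Arzel\`a--Ascoli a priori only gives relative compactness.
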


\begin{proof}[Proof of Lemma  \ref{lem-tildemu_n-exptig}]
For any $\alpha>0$, define 
\begin{equation*}
K_{\alpha} := \{f \in C_0[0,1]: |f(x)-f(y)|\leq (1+\alpha)|x-y|, \forall x, y \in [0,1] \}.	
\end{equation*}
Now we show that $K_{\alpha}$ is the set we wanted. 

Firstly, we show that $K_{\alpha}$ is a compact subset of $C_0[0,1]$. By Arzel\`a-Ascoli Theorem , we only have to show that $K_\alpha$ is equicontinuous, uniformly bounded and closed.

\textbf{Closed set.} Supposing $\{f_n\}\subset K_\alpha$ and uniformly converging to $f\in C_0[0,1]$, then 
	 	       \[|f(x)-f(y)|=\lim_{n\to\infty} |f_n(x)-f_n(y)| \leq  (1+\alpha)|x-y|.\]
	 	      Thus $f\in K_\alpha$. Therefore, $K_\alpha$ is a closed set.

\textbf{Uniform boundedness.} For any $f \in K_{\alpha}$,
	 	      \[ |f(x)|=|f(x)-f(0)| \leq (1+\alpha)x \leq 1+\alpha. \]
	 	       Hence $K_\alpha$ is uniformly bounded.
	 	       
\textbf{Equicontinuous.}	 For any $\varepsilon>0$, there exists $\delta=\frac{\varepsilon}{1+\alpha}$ such that 
	 	      \[|f(x)-f(y)| \leq (1+\alpha)|x-y| <\varepsilon\]
	 	      for all $|x-y|< \delta$ and $f\in K_\alpha$. Therefore, $K_\alpha$ is equicontinuous.

Therefore, we have shown that $K_{\alpha}$ is  compact in $C_0[0,1]$.
	 
 We now show the exponential tightness. By the definition of $\tilde{Z}_n(t)$, we know that 
	 \begin{equation*}
	 	\frac{d(\tilde{Z}_n(t))}{dt}=l(Y_{[nt]+1})-l(Y_{[nt]}),
	 \end{equation*}
	 for all $0\leq t \leq 1$ and $nt \notin \mathbb{N}$.  Thus $\tilde{Z}_n(\cdot) \in K_{\alpha}$ which implies $\tilde{\mu}_n(K_{\alpha}^C)=0$. Therefore,
	 \begin{equation*}
	 	\lim_{\alpha \to \infty} \limsup_{n\to\infty} \frac{1}{n} \log \tilde{\mu}_n(K_{\alpha}^C)=-\infty.
	 \end{equation*}
	 Thus the sequence $(\tilde{\mu}_n)$ is exponentially tight in $C_0([0,1])$.
\end{proof}

 With the groundwork laid in previous sections, we now complete the proof of our principal result, Theorem \ref{thm-ldp}.

\begin{proof}[Proof of Theorem \ref{thm-ldp}]
By Lemma \ref{lem-tildemu_n-ldp}, the probability measures sequence $( \tilde{\mu}_n )$ satisfies the LDP in the topology space $\mathcal{X}$ (endowed with the pointwise convergence topology) with the rate function $I_{\mathcal{X}}$.

From the definition of $\tilde{Z}_n(\cdot)$, we observe that the effective domain $\mathcal{D}_I \subset C_0([0,1])$ and  $\tilde {\mu}_n(C_0([0,1]))=1$.  Thus by  Proposition \ref{thm-Dembo-lem4.1.5}, the same LDP also holds in the space $(C_0([0,1]),\tau_2)$, where  $\tau_2$ is the pointwise convergence topology. 

Let $\tau_1$ be the topology of uniform convergence on $C_0([0,1])$.  Since $\tau_2$ is generated by the sets $V_{t,x,\delta} :=\{g\in C_0([0,1]):|g(t)-x|<\delta\}$, $t\in(0,1]$, $x\in \mathbb{R}$, $\delta>0$ and each $V_{t,x,\delta}$ is an open set in $\tau_1$, it follows that $\tau_1$ is finer than $\tau_2$, i.e., $\tau_2 \subset\tau_1$.
Hence, by the exponential tightness of $(\tilde{\mu}_n)$ in $(C_0([0,1]),\tau_1)$   (Lemma \ref{lem-tildemu_n-exptig}) and Lemma \ref{thm-Dembo-cor4.2.6},  we can lift the LDP of $(\tilde{\mu}_{n})$ to $(C_0([0,1]),\tau_1)$ with the same rate function $I_{\mathcal{X}}$.

Since $C_0([0,1])$ is a closed subset of $L_{\infty}([0,1])$, by Theorem \ref{thm-Dembo-lem4.1.5},  it follows that  $(\tilde{\mu}_n)$ satisfies the LDP in $(L_{\infty}([0,1]),\tau_1)$ and now the rate function is 
\begin{align*}
        I(f)=\begin{cases}
            I_{\mathcal{X}}(f), & f \in  C_0([0,1]),\\
            \infty, &  \text{otherwise}.
        \end{cases}
    \end{align*}
Therefore, since $(\mu_n)$ and $(\tilde{\mu}_n)$ are exponentially equivalent in $L_\infty([0,1])$ (Lemma \ref{lem-expequ}), then by Theorem \ref{thm-Dembo-thm4.2.13}, $(\mu_n)$ also satisfies the same LDP in $(L_{\infty}([0,1]),\tau_1)$ with the rate function 
\begin{align*}
        I(f)=\begin{cases}
            \sup_{J\in \mathcal{J}} I_J(p_J(f)), & f \in  C_0([0,1]),\\
            \infty, &  \text{otherwise}.
        \end{cases}
    \end{align*}

Finally, due to Lemma \ref{lem-mu_n^J-ratefun}, $\Lambda^J$ exists and is finite everywhere, and $I_J$ is the Fenchel-Legendre transform of $\Lambda^J$, namely,
\begin{equation*}
I_J(x) = \sup_{\lambda \in \mathbb{R}^j} \left\{ \langle \lambda, x \rangle - \Lambda^J(\lambda) \right\}, \quad \forall x \in (\mathbb{R}_{\geq 0})^j.
\end{equation*}
Now we complete the proof of the theorem.
\end{proof}

\section{Examples and Questions}

\subsection{Simple Random Walks on Regular Trees}
In this section we consider the simple random walk $(Y_n)_{n \geq 0}$ starting at $e$ on regular trees $T_d$, $d\geq3$, i.e., $Y_0=e$ and $Y_n =X_1 \cdots X_n$ for every $n \geq 1$, where the $X_n$ are independent and uniformly distributed random variables on $S=\{a_1,\ldots,a_d\}$ (the generating set of $T_d$).

It is easy to see that $l(Y_n)$ (the word length of $Y_n$) is a Markov chain on $\mathbb{N}=\{0 ,1, 2,\ldots \}$ with the following transition probabilities: for $u, v \in \mathbb{N}$ with $|u-v|=1$,  
\begin{equation}
p^Y(u,v) = \begin{cases}
 1 & \textrm{if $u=0, v=1$,}\\
 \frac{d-1}{d} & \textrm{if $u > 0, v=u+1$,}\\
 \frac{1}{d} & \textrm{if $u > 0, v=u-1$.}
  \end{cases} 
\end{equation}

For the Markov chain $l(Y_n)$, we can couple it with a biased random walk  $(R_n)_{n=0}^{\infty}$ on $\mathbb{Z}$ with the parameter $\frac{1}{d-1}$. The  biased random walk $(R_n)_{n=0}^{\infty}$ is a Markov chain on $\mathbb{Z}$ with the following transition probabilities: for $u, v \in \mathbb{Z}$ with $|u-v|=1$,
\begin{equation}
p^R(u,v) = \begin{cases}
\frac{1}{2} & \textrm{if $u=0$,}\\
 \frac{d-1}{d} & \textrm{if $|v| = |u|+1$,}\\
 \frac{1}{d} & \textrm{if $|v| = |u|-1$,}
\end{cases}
\end{equation}
in particular, 
\begin{equation}
	\begin{cases}
		p(u,u+1)= 1-p(u,u-1)=\frac{d-1}{d} & \textrm{if $u>0$,} \\
		p(u,u+1)= 1-p(u,u-1)=\frac{1}{d} & \textrm{if $u<0$,} \\
		p(0,1)=p(0,-1)=\frac{1}{2} & \textrm{if $u=0$.}
	\end{cases}
\end{equation}
Hence, $|R_n|$ ( the absolute value of $R_n$) is a Markov chain with the same transition probabilities as $l(Y_n)$. So the sample path LDP for $l(Y_n)$ is the same as that of $|R_n|$. Therefore, using the results of the biased random walk, we can get an explicit expression of the rate function of the sample path LDP for $l(Y_n)$.

First, due to the result of \cite[Theorem 3.1]{LSWX20}, we know that $\left(\frac{1}{n}|R_n|\right)$ satisfies the LDP with the good rate function given by 
\begin{equation} \label{Lambda*}
\Lambda^{*}(x)=\begin{cases}
	-\frac{1+x}{2} \log (d-1) +(1+x) \log\sqrt{1+x}+(1-x) \log\sqrt{1-x} + \log\frac{d}{2} & x\in[0,1], \\
	\infty & \textrm{otherwise.}
\end{cases}	
\end{equation}
Then adopting the strategy similar to the proof of \cite[Theorem 5.1.2]{DZ09}, the sample path LDP (Mogulskii type theorem) holds for $|R_n|$. In particular,  the law of 
\begin{equation*}
	Z_n'(t) :=\frac{1}{n} |R_{[nt]}|,  \ 0\leq t \leq 1,
\end{equation*}
in $L_{\infty}([0,1])$ satisfies the LDP with the good rate function
\begin{equation} \label{ratefunction}
	I(f) = \begin{cases}
		\int_0^1 \Lambda^{*}(\dot{f}(t)) dt & \textrm{if $f \in \mathcal{AC}_0$,} \\
		0 & \textrm{otherwises,}
	\end{cases}
\end{equation}
where $\Lambda^{*}$ is given by \eqref{Lambda*} and $\mathcal{AC}_0$ is the space of nonnegative absolutely continuous functions $f$ on $[0,1]$ starting at $0$ such that $\| f\| \leq 1$, i.e.
\begin{eqnarray*}
	\lefteqn{\mathcal{AC}_0 := \left\{  f\in C_0([0,1]):  \| f\| \leq 1, \sup_{0\leq s< t\leq 1} \frac{|f(t)-f(s)|}{t-s} \leq 1 \text{ and } \right. } \\
	& \left. \sum_{l=1}^k |t_l- s_l| \to 0, s_l < t_l \leq s_{l+1} < t_{s+1} \Rightarrow  \sum_{l=1}^k |f(t_l)- f(s_l)|\to 0 \right\}.
\end{eqnarray*}

Therefore, the rate function of $(\mu_n)$ in Theorem \ref{thm-ldp} in the case of the simple random walk is the same as $I$ given by \eqref{ratefunction}. For more detail, see \cite[Remark 3.1]{LSWX20}.

\subsection{More Questions}
There are still many shortcomings in our work. First, while we establish the sample path LDP, our analysis only provides an implicit expression of the rate function $I$ rather than its explicit form. This limitation stems from current theoretical challenges in calculating the rate function of the LDP for path-dependent Markov chains. 

Furthermore, only the convexity of the rate function is considered, and no more research is conducted on strict convexity, derivative properties etc. Therefore, we want to ask:
\begin{itemize}
	\item Can the rate function of the sample path LDP be calculated?
	\item Is the rate function $I$ in Theorem \ref{thm-ldp} always strictly convex?  What is the derivative of $I$ and what are its related properties
\end{itemize}

Based on the results of this paper, the following questions can also be considered:
\begin{itemize}
	\item Is it possible to calculate the probabilities that simultaneously account for the terminal letter of the reduced word and the word length by combining the calculation method of Lalley et al?
	\item Consider the sample path LDP on more general graphs.
\end{itemize}

 \section*{Acknowledgments}
 The authors would like to thank Longmin Wang for numerous valuable suggestions and comments to improve the quality of this paper. The project is supported partially by the National Natural Science Foundation of China (No. 12171252).

\bibliographystyle{alpha}
  \bibliography{SPLDP.bib} 

\begin{thebibliography}{LSWX20}

\bibitem[Ave76]{Ave76}
Andr{\'e} Avez.
\newblock Harmonic functions on groups.
\newblock In {\em Differential Geometry and Relativity: A Volume in Honour of
  Andr{\'e} Lichnerowicz on His 60th Birthday}, pages 27--32. Springer, 1976.

\bibitem[BB11]{BB11}
Haim Brezis and Haim Br{\'e}zis.
\newblock {\em Functional analysis, Sobolev spaces and partial differential
  equations}, volume~2.
\newblock Springer, 2011.

\bibitem[BJ99]{BJ99}
Philippe Bougerol and Thierry Jeulin.
\newblock Brownian bridge on hyperbolic spaces and on homogeneous trees.
\newblock {\em Probability theory and related fields}, 115:95--120, 1999.

\bibitem[CM17]{CM17}
Xinxin Chen and Gr{\'e}gory Miermont.
\newblock Long brownian bridges in hyperbolic spaces converge to brownian
  trees.
\newblock 2017.

\bibitem[Cor21]{Cor21}
Emilio Corso.
\newblock Large deviations for random walks on free products of finitely
  generated groups.
\newblock {\em Electronic Journal of Probability}, 26:1--22, 2021.

\bibitem[DG87]{DG87}
Donald~A Dawsont and J{\"u}rgen G{\"a}rtner.
\newblock Large deviations from the mckean-vlasov limit for weakly interacting
  diffusions.
\newblock {\em Stochastics: An International Journal of Probability and
  Stochastic Processes}, 20(4):247--308, 1987.

\bibitem[DZ09]{DZ09}
Amir Dembo and Ofer Zeitouni.
\newblock {\em Large deviations techniques and applications}, volume~38.
\newblock Springer Science \& Business Media, 2009.

\bibitem[GdlH13]{GH13}
Etienne Ghys and Pierre da~la Harpe.
\newblock {\em Sur les groupes hyperboliques d'apres Mikhael Gromov},
  volume~83.
\newblock Springer Science \& Business Media, 2013.

\bibitem[Gro92]{Gro92}
Mikhael Gromov.
\newblock Asymptotic invariants of infinite groups.
\newblock Technical report, P00001028, 1992.

\bibitem[Gui80]{Gui80}
Yves Guivarc'h.
\newblock Sur la loi des grands nombres et le rayon spectral d'une marche
  al{\'e}atoire.
\newblock {\em Ast{\'e}risque}, 74(3):47--98, 1980.

\bibitem[Kin73]{Kin73}
John Frank~Charles Kingman.
\newblock Subadditive ergodic theory.
\newblock {\em The annals of Probability}, pages 883--899, 1973.

\bibitem[KV83]{KV83}
Vadim~A Kaimanovich and Anatoly~M Vershik.
\newblock Random walks on discrete groups: boundary and entropy.
\newblock {\em The annals of probability}, 11(3):457--490, 1983.

\bibitem[Lal91]{Lal91}
Steven~P Lalley.
\newblock Saddle-point approximations and space-time martin boundary for
  nearest-neighbor random walk on a homogeneous tree.
\newblock {\em Journal of Theoretical Probability}, 4:701--723, 1991.

\bibitem[Lal93]{Lal93}
Steven~P Lalley.
\newblock Finite range random walk on free groups and homogeneous trees.
\newblock {\em The Annals of Probability}, pages 2087--2130, 1993.

\bibitem[Lal23]{Lal23}
Steven~P Lalley.
\newblock {\em Random walks on infinite groups}.
\newblock Springer, 2023.

\bibitem[Led01]{Led01}
Fran{\c{c}}ois Ledrappier.
\newblock Some asymptotic properties of random walks on free groups.
\newblock In {\em CRM Proceedings and Lecture Notes}, pages 117--152. American
  Mathematical Society, 2001.

\bibitem[LMW24]{LMW24}
Shuwen Lai, Heng Ma, and Longmin Wang.
\newblock Multifractal spectrum of branching random walks on free groups.
\newblock {\em arXiv preprint arXiv:2409.01346}, 2024.

\bibitem[LS87]{LS87}
James Lynch and Jayaram Sethuraman.
\newblock Large deviations for processes with independent increments.
\newblock {\em The annals of probability}, 15(2):610--627, 1987.

\bibitem[LSWX20]{LSWX20}
Yuelin Liu, Vladas Sidoravicius, Longmin Wang, and Kainan Xiang.
\newblock An invariance principle and a large deviation principle for the
  biased random walk on.
\newblock {\em Journal of Applied Probability}, 57(1):295--313, 2020.

\bibitem[Lyo90]{Lyo90}
Russell Lyons.
\newblock Random walks and percolation on trees.
\newblock {\em The annals of Probability}, 18(3):931--958, 1990.

\bibitem[P{\'o}l21]{Pol21}
Georg P{\'o}lya.
\newblock {\"U}ber eine aufgabe der wahrscheinlichkeitsrechnung betreffend die
  irrfahrt im stra{\ss}ennetz.
\newblock {\em Mathematische Annalen}, 84(1):149--160, 1921.

\bibitem[SS87]{SS87}
Stanley Sawyer and Tim Steger.
\newblock The rate of escape for anisotropic random walks in a tree.
\newblock {\em Probability theory and related fields}, 76:207--230, 1987.

\bibitem[Var66]{VS66}
SR~Srinivasa Varadhan.
\newblock Asymptotic probabilities and differential equations.
\newblock {\em Communications on Pure and Applied Mathematics}, 19(3):261--286,
  1966.

\bibitem[Var85]{Var85}
Nicholas Varopoulos.
\newblock Long range estimates for markov chains.
\newblock {\em Bulletin des sciences math{\'e}matiques (Paris. 1885)},
  109(3):225--252, 1985.

\bibitem[Woe00]{Woe00}
Wolfgang Woess.
\newblock {\em Random walks on infinite graphs and groups}.
\newblock Number 138. Cambridge university press, 2000.

\end{thebibliography}
\end{document}